 \newtheorem {theorem} {Theorem} 
 \newtheorem {proposition} {Proposition} 
 \newtheorem {lemma} {Lemma}
 \newtheorem {definition}{Definition}
 \def\R{\mathbb{R}}
 \def\H{\mathbb{H}^2_+}
  \def\S{\mathbb{S}^2_+}
\title {Periodic Orbits of Oval Billiards on Surfaces of Constant Curvature } 
\author {Luciano Coutinho dos Santos \footnote{CEFET-MG, Brazil, email: astrofisico2@deii.cefetmg.br} \quad\quad S\^onia Pinto-de-Carvalho\footnote{UFMG,Brazil, email: sonia@mat.ufmg.br}} 
\date {}
\begin{document}
\maketitle 

\begin{abstract}

In this paper we define and study the billiard problem on bounded regions on surfaces of constant curvature. We show that this problem defines a 2-dimensional conservative and reversible dynamical system, defined by a Twist diffeomorphism, if the boundary of the region is an oval. Using these properties and defining good perturbations for billiards, we show that having only a finite number of nondegenerate periodic orbits for each fixed period is an open property for billiards on surfaces of constant curvature and a dense one on the hyperbolic plane. We finish this paper studying the stability of these nondegenerate orbits.
\end{abstract}

\section{Introduction}

The planar billiard problem, originally defined by Birkhoff \cite{bir} in the beginning of the XX century, consists in the free motion of a point particle in a bounded planar region, reflecting elastically when it reaches the boundary. 

In this  work we extend this problem to bounded regions  on geodesically convex subsets of  surfaces of constant curvature. We will show that this new billiard also defines a 2-dimensional conservative and reversible dynamical system, defined by a Twist diffeomorphism, if the boundary of the region is an oval, i.e., a regular, simple, closed, oriented, strictly geodesically convex and at least $C^2$ curve. This is a classical result  for the Euclidean case, proved, for instance, in \cite{kat}.  

Once we have proved that we have a very special dynamical system, we address the question of how many $n$-periodic orbits such a billiard can have. Bolotin \cite{bol} proved that the geodesic circular billiard on surfaces of constant curvature is integrable and then has infinitely many  orbits of any period. A classical result for Twist maps (see, for instance, \cite{kat}), proved for planar oval billiards in \cite{bir}, applied to our billiards states that the oval billiard map $T$ has at least two 2-periodic orbits and at least four $n$-periodic orbits, for each fixed $n\neq 2$.
Generic $C^1$ planar billiards have only a finite number of nondegenerate periodic orbits, for each fixed period, as proved by Dias Carneiro, Oliffson Kamphorst and Pinto-de-Carvalho in \cite{dia}.
In this new context we get a less general result and show that having only a finite number of nondegenerate periodic orbits, for each fixed period, is an open and dense property for $C^\infty$ oval billiards on the Hyperbolic Plane and is only open on a hemisphere of the unit sphere.
 We finish this paper studying the stability of these nondegenerate orbits using the MacKay-Meiss Criterium \cite{mac}.

Billiards on the Euclidean plane were, and still are, extensively studied. Billiards on surfaces of constant curvature are much less studied and the papers focus on special properties.
For instance,  Veselov \cite{ve} , Bolotin \cite{bol}, Dragov\'ic, Jovanov\'ic and Radnov\'ic \cite{dra}, Popov and Topalov \cite{pop}, \cite{popo}  and Bialy \cite{bia} deal with the question of integrability. B.Gutkin, Smilansky and E.Gutkin \cite{gu} looked at hyperbolic billiards on the sphere and the hyperbolic plane. E.Gutkin and Tabachnikov \cite{gut} studied geodesic polygonal billiards. Blumen, Kim, Nance and Zarnitsky \cite{blu} studied periodic orbits of billiards on surfaces of constant curvature, using the tools of geometric optics.
Among them, only Bialy \cite{bia} and Zhang \cite{zh} looked more closely to oval billiards.

\section{Ovals on surfaces of constant curvature}

For the study of  billiards, we will only be interested in the behavior of the geodesics and the measure of angles. Excluding the Euclidean plane, we can then take as model of surface of constant curvature,  denoted by $S$, one of the surfaces:
 an open hemisphere of the unit sphere $\S$, given in $\R^3$ by $\{z=\sqrt{1-x^2-y^2}, z>0\}$ or
 the upper sheet of the hyperbolic plane $\H$, given in $\R^{2,1}$ by $\{z=\sqrt{1+x^2+y^2}\}$.

The geodesics on $S$ are the intersections of the surface with the planes passing by the origin.  $S$ is geodesically convex and the distance between two points $X$ and $Y$ on $S$ is measured by
$$d_S(X,Y)=\, \left\{\begin{array}{ccc}
   \arccos\langle X,Y\rangle&  \mbox{if}& X,Y\in\S\\
   \mathrm{arccosh}(-\langle\langle X,Y\rangle\rangle) & \mbox{if}& X,Y\in \H
\end{array}\right. $$
where $\langle,\rangle$ is the usual inner product on $\R^3$ and $\langle\langle,\rangle\rangle$ is the inner product on $\R^{2,1}$.

Given $X, Y \in S$, the geodesic from $X$ to $Y$ is
 $${Y\, =\, \left\{\begin{array}{rcc} X \cos d + \tau \sin d    & \mbox{  in  } & \S\\[1ex] X \cosh d + \tau \sinh d  & \mbox{  in  } & \H\\\end{array}\right.}$$ 
 where $d:=d_S(X,Y)$  and $\tau$ is the unitary tangent vector to the geodesic at $X$. 

\begin{definition} A regular curve $\Gamma(t)\subset S$ is said to be geodesically strictly convex if the intersection of  any geodesic tangent to $\Gamma$ with the curve $\Gamma$ has only one point.\end{definition}

It is proved  on \cite{litlle}  for the spherical case and  on \cite{convexgeo} for the hyperbolic case that

\begin{lemma}\label{convexcurve}
If a curve $\Gamma\subset S$ is closed, regular, simple, $C^j, j\geq2$ and has strictly positive geodesic curvature then $\Gamma$ is geodesically strictly convex.
\end{lemma}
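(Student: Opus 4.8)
\smallskip\noindent\emph{Proof idea.} The plan is to treat $\E$, $\S$ and $\H$ uniformly through the plane--section picture of geodesics and to reduce the statement to the claim that one scalar function has exactly one zero. On each of the three surfaces the geodesics are the intersections of $S$ with the $2$--planes through the origin of the ambient space ($\R^{3}$, resp.\ $\R^{2,1}$), so the geodesic tangent to $\Gamma$ at $\Gamma(t_0)$ is $\gamma_{t_0}=S\cap\Pi(t_0)$, where $\Pi(t_0)$ is the plane through the origin containing $\Gamma(t_0)$ and $\Gamma'(t_0)$. Writing $Q$ for the ambient bilinear form ($Q=<,>$ on $\E$, $\S$ and $Q=<<,>>$ on $\H$), I would pick $N_0\neq0$ with $Q(N_0,\Gamma(t_0))=Q(N_0,\Gamma'(t_0))=0$; this is possible, and $N_0$ is unique up to a scalar, because $\Gamma(t_0),\Gamma'(t_0)$ span a $Q$--nondegenerate plane (in the hyperbolic model $N_0$ turns out to be spacelike). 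Since a point of $S$ lies on $\gamma_{t_0}$ exactly when it lies in $\Pi(t_0)$,
\[
 \Gamma\cap\gamma_{t_0}=\{\,\Gamma(s):\ h(s)=0\,\},\qquad h(s):=Q(\Gamma(s),N_0),
\]
and the lemma becomes the assertion that $h$ vanishes only at $t_0$ on the parameter circle.

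For the local step I would parametrize $\Gamma$ by arc length and use the Frenet--type frame adapted to $S$, so that $\Gamma''=\kappa_g\,n+(\text{a term normal to }S)$, with $\kappa_g>0$ the geodesic curvature and $n$ the inner unit normal of $\Gamma$ in $S$; in the non-flat models the normal-to-$S$ term is a multiple of $\Gamma(t_0)$, and in the flat model it vanishes. Then $h(t_0)=0$, $h'(t_0)=Q(\Gamma'(t_0),N_0)=0$, and, since $Q(\Gamma(t_0),N_0)=0$,
\[
 h''(t_0)=\kappa_g(t_0)\,Q(n(t_0),N_0).
\]
A short check in each model gives $Q(n(t_0),N_0)\neq0$ (on $\S$ and $\H$ one even has $n(t_0)\parallel N_0$, while on $\E$ it equals $1$), so $h''(t_0)$ has the fixed sign of $\kappa_g(t_0)\neq0$. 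After possibly replacing $N_0$ by $-N_0$ we may take $h''(t_0)>0$, so $t_0$ is a strict local minimum of $h$, hence an isolated zero.

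The global step -- ruling out any further zero -- is the heart of the matter, and here I would first promote the pointwise hypothesis to global convexity. As a simple closed curve in a topological disk, $\Gamma$ bounds a region $\Omega$, and $\kappa_g>0$ together with a Hadamard--type theorem (available on the Cartan--Hadamard surfaces $\E$, $\H$, and also inside a hemisphere of $\S$ since $\Gamma$ lies in a geodesically convex set) yields that $\bar\Omega$ is geodesically convex. Then each tangent geodesic $\gamma_{t_0}$ is a \emph{supporting} geodesic of $\bar\Omega$ and, $\Omega$ being open, $\bar\Omega\cap\gamma_{t_0}\subseteq\Gamma$. If $h$ had a zero $q=\Gamma(t_1)\neq p:=\Gamma(t_0)$, then $p,q\in\gamma_{t_0}$ puts the geodesic segment $[p,q]$ on $\gamma_{t_0}$, while convexity puts $[p,q]$ in $\bar\Omega$; hence $[p,q]\subseteq\bar\Omega\cap\gamma_{t_0}\subseteq\Gamma$, so a nontrivial sub--arc of $\Gamma$ would be a geodesic segment, contradicting $\kappa_g>0$. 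Thus $q=p$. (In the Euclidean case one can argue more directly: $\kappa_g>0$ makes the tangent angle $\phi$ an admissible parameter, $\frac{dh}{d\phi}=\rho(\phi)\sin(\phi-\phi_0)$ with $\rho>0$ changes sign only at $\phi_0$ and $\phi_0+\pi$, and $h$ returns to $0$ after one full turn, so $h>0$ in between. An analogous monotonicity can be organized on $\S$ and $\H$ through the Gauss--type normal curve $t\mapsto N(t)/|N(t)|$, which again has nowhere--vanishing geodesic curvature, but the bookkeeping there is heavier.)

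I expect the global step to be the real obstacle: passing from ``isolated zero'' to ``unique zero'' requires controlling the total turning of the tangent direction along $\Gamma$, and it is precisely here that the three geometries diverge -- on $\S$ one must exploit that the curve lies in a hemisphere for ``one full turn'' to make sense, and on $\H$ one must track causal characters for the indefinite form $<<,>>$. This global convexity fact is exactly what is established in \cite{litlle}, \cite{convexgeo} and \cite{PV}, which is why it is quoted here rather than reproved.
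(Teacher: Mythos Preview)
The paper does not actually prove this lemma; the sentence preceding it reads ``It is proved on \cite{litlle} for the spherical case, on \cite{convexgeo} for the hyperbolic case and for instance on \cite{PV} for the Euclidean case that\ldots'' and the lemma is then stated without further argument. So your proposal is already more than the paper offers, and there is no in-paper proof to compare against line by line.

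That said, your sketch is essentially correct and well organized. The local step via the height function $h(s)=Q(\Gamma(s),N_0)$ is clean: the computation $h(t_0)=h'(t_0)=0$, $h''(t_0)=\kappa_g(t_0)\,Q(n(t_0),N_0)\neq 0$ goes through in all three models exactly as you describe, and it gives the isolated-zero statement. For the global step you invoke a Hadamard-type theorem to get geodesic convexity of $\bar\Omega$, then argue that a second intersection would force a geodesic arc inside $\Gamma$, contradicting $\kappa_g>0$. This reduction from ``tangent geodesic meets $\Gamma$ only once'' to ``$\bar\Omega$ is geodesically convex'' is valid, but---as you yourself acknowledge in the last paragraph---the convexity of $\bar\Omega$ is precisely the content of \cite{litlle}, \cite{convexgeo}, \cite{PV}. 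So you have not avoided the cited references; you have repackaged their output and added a clear deduction of the lemma from it. Your self-contained Euclidean argument via the tangent-angle parameter $\phi$ and the sign pattern of $dh/d\phi$ is a genuine alternative for $\E$; the analogues on $\S$ and $\H$ can be made to work but, as you say, require more bookkeeping (on $\S$ one must also use that $\Gamma$ sits in an open hemisphere so that ``one full turn'' is well defined).
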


\begin{definition} 
An oval is a regular, simple, closed, oriented, $C^j$ curve, $j\geq 2$, with  strictly positive  geodesic curvature . 
 \end{definition}

By lemma \ref{convexcurve}, any oval is geodesically strictly convex. 

\section{Billiards on ovals}

Let $\Gamma\subset S$ be an oval and $\Omega$ the region bounded by $\Gamma$.
Analogously to the planar case, we can define the billiard on $\Gamma$ as the free motion of a point particle inside $\Omega$, reflecting elastically at the impacts with $\Gamma$.
Since the motion is free, the particle moves along a geodesic of $S$ while it stays inside $\Omega$ and reflects making equal angles with the tangent at the impact with $\Gamma$. The trajectory of the particle is a geodesic polygonal line, with vertices at the impact points.

As $\Omega$ is a bounded subset of a geodesically convex surface, with strictly geodesically convex boundary, the motion is completely determined  by the impact point and the direction of movement immediately after each reflection. 
So, a parameter which locates the point of impact, and the angle between the direction of motion and the tangent to the boundary at the impact point, may be used to describe the system.

Let $l$ be the length of $\Gamma$, $s$ the arclength parameter  for $\Gamma$ and 
$\psi\in (0,\pi)$  be the angle that measures the direction of motion at the impact point. Let ${\cal C}$ be the cylinder $\R/l \mathbb Z \times(0,\pi)$.

We can define the billiard map $T_\Gamma$ on ${\cal C}$ which associates to each impact point and direction of motion $(s_0,\psi_0)$ the next impact and direction $(s_1,\psi_1)=T_\Gamma(s_0,\psi_0)$.

This billiard map defines a 2-dimensional dynamical system and the orbit of any initial point $(s_0,\psi_0)$ is the set
${\cal{O}}(s_0,\psi_0)=\{T_\Gamma^i(s_0,\psi_0)=(s_i,\psi_i), i\in\mathbb Z\}.$

\subsection{Properties of the generating function}

As above, let $d_S$ be the geodesic distance on $S$ and $\Gamma \subset S$  be an oval, parameterized by the arclength parameter $s$.

\begin{lemma}\label{lema:geradora}
Let $T_\Gamma(s_0,\psi_0)=(s_1,\psi_1)$ be the billiard map on $\Gamma$ and $g(s_0,s_1)=- d_S(\Gamma(s_0),\Gamma(s_1))$. Then
$g$ verifies
$$\frac{\partial g}{\partial s_0}(s_0,s_1)=\cos \psi_0 \quad\quad\mbox{and}\quad\quad \frac{\partial g}{\partial s_1}(s_0,s_1)=-\cos \psi_1$$
\end{lemma}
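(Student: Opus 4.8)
The plan is to check both identities by a direct computation in each of the three models $\E$, $\S$ and $\H$, using the explicit formulas for $d_S$ listed above. Write $X=\Gamma(s_0)$, $Y=\Gamma(s_1)$. In every model $\Gamma$ is parametrized by arclength, so $\Gamma'(s)$ is a unit tangent vector, and $\psi_0$ is by definition the angle between $\Gamma'(s_0)$ and the direction of motion leaving $\Gamma(s_0)$, which is the initial unit tangent of the geodesic arc from $X$ to $Y$; thus $\cos\psi_0$ is the inner product of those two unit vectors. The one extra fact needed in the curved cases is that, since $\Gamma(s)$ lies on $\langle X,X\rangle=1$ (resp. $\langle\langle X,X\rangle\rangle=-1$), differentiation gives $\langle\Gamma(s),\Gamma'(s)\rangle=0$ (resp. $\langle\langle\Gamma(s),\Gamma'(s)\rangle\rangle=0$): the position vector is orthogonal to the tangent of the curve.

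Take the spherical case as representative, so $g=-\arccos\langle X,Y\rangle$ and
\[
\frac{\partial g}{\partial s_0}=\frac{\langle\Gamma'(s_0),Y\rangle}{\sqrt{1-\langle X,Y\rangle^2}},\qquad \frac{\partial g}{\partial s_1}=\frac{\langle X,\Gamma'(s_1)\rangle}{\sqrt{1-\langle X,Y\rangle^2}}.
\]
The geodesic from $X$ to $Y$ is $\gamma(t)=\cos t\,X+\sin t\,v$ with $v=(Y-\langle X,Y\rangle X)/\sqrt{1-\langle X,Y\rangle^2}$, the unit tangent at $X$ pointing towards $Y$; this $v$ is the direction of motion leaving $\Gamma(s_0)$. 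Using $\langle X,\Gamma'(s_0)\rangle=0$ one gets $\langle v,\Gamma'(s_0)\rangle=\langle Y,\Gamma'(s_0)\rangle/\sqrt{1-\langle X,Y\rangle^2}$, which is $\cos\psi_0$, and the first identity follows. For the second, $\gamma'$ at the endpoint $Y$ equals $w=(\langle X,Y\rangle Y-X)/\sqrt{1-\langle X,Y\rangle^2}$, the direction of motion arriving at $\Gamma(s_1)$; the reflection law makes its tangential component equal to that of the reflected ray, so $\langle w,\Gamma'(s_1)\rangle=\cos\psi_1$, and with $\langle Y,\Gamma'(s_1)\rangle=0$ one reads off $\partial g/\partial s_1=-\langle w,\Gamma'(s_1)\rangle=-\cos\psi_1$.

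The Euclidean case is the same computation with $g=-\sqrt{\langle X-Y,X-Y\rangle}$: one gets $\partial g/\partial s_0=\langle(Y-X)/d_S,\Gamma'(s_0)\rangle$, and $(Y-X)/d_S$ is the unit direction of motion from $X$ to $Y$, hence equals $\cos\psi_0$, and symmetrically for $s_1$. The hyperbolic case is formally identical to the spherical one after replacing $\langle\,\cdot\,,\,\cdot\,\rangle$ by $\langle\langle\,\cdot\,,\,\cdot\,\rangle\rangle$, $\arccos$ by $\mathrm{arccosh}$, $(\cos t,\sin t)$ by $(\cosh t,\sinh t)$ and tracking the signs coming from $\langle\langle X,X\rangle\rangle=-1$ and $g=-\mathrm{arccosh}(-\langle\langle X,Y\rangle\rangle)$; there one uses $\sqrt{\langle\langle X,Y\rangle\rangle^2-1}=\sinh d_S$. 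I expect the only delicate point to be the sign and orientation bookkeeping: making sure the vector produced by differentiating $d_S$ is the direction of motion and not its opposite, and noting that the reflection law is precisely what makes $\partial g/\partial s_1$ insensitive to whether $\psi_1$ is measured from the incoming or the reflected ray --- which is exactly the property that lets $g$ serve as a generating function. A more synthetic alternative would be to invoke the first variation formula for geodesic length, $\nabla_X d_S(X,Y)$ being the unit tangent at $X$ of the geodesic from $Y$ to $X$, and contract with $\Gamma'(s_0)$; but since we have explicit models the direct computation is just as short and self-contained.
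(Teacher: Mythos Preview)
Your argument is correct and follows essentially the same route as the paper: a direct computation in each of the three models, differentiating the explicit formula for $d_S$ and recognizing the resulting expression as the inner product of $\Gamma'(s_i)$ with the unit tangent $\tau_i$ of the geodesic chord at $\Gamma(s_i)$. The paper's proof simply names these tangents $\tau_0,\tau_1$ and uses the relation $\Gamma(s_1)=\cos(-g)\,\Gamma(s_0)+\sin(-g)\,\tau_0$ (and its hyperbolic analogue) without writing out the geodesic parametrization, whereas you derive $v$ and $w$ explicitly; but this is the same computation, and your remark that the reflection law is what makes $\partial g/\partial s_1$ insensitive to which of the two rays $\psi_1$ is measured from is exactly the implicit step behind the paper's ``analogously'' for the $s_1$ derivative.
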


\begin{proof}
Let  $\tau_i, i=0$ or $1$, be the unitary tangent vector to the oriented geodesic joining  $\Gamma(s_0)$ to $\Gamma(s_1)$, at $\Gamma(s_i)$.

When $S=\S$ we have that 
$\cos g(s_0,s_1)=\langle\Gamma(s_0),\Gamma(s_1)\rangle$ and then
\begin{eqnarray}
\frac{\partial g}{\partial s_0}(s_0,s_1)&=&\frac{\langle\Gamma'(s_0),-\Gamma(s_1)\rangle}{\sin g(s_0,s_1)}
= \frac{\langle\Gamma'(s_0), \sin g(s_0,s_1)\, \tau_0-\cos g(s_0,s_1)\, \Gamma(s_0)\rangle}{\sin g(s_0,s_1)}\nonumber
\\
&=&\langle\Gamma'(s_0),\tau_0\rangle=\cos\psi_0.\label{eq:S1}
\end{eqnarray}
Analogously $\frac{\partial g}{\partial s_1}(s_0,s_1)=\langle\Gamma'(s_1),-\tau_1\rangle=-\cos\psi_1.$

When $S=\H$ we have that 
$\cosh g(s_0,s_1)=-\langle\langle\Gamma(s_0),\Gamma(s_1)\rangle\rangle$ and then
\begin{eqnarray}
\frac{\partial g}{\partial s_0}(s_0,s_1)&=&\frac{\langle\langle\Gamma'(s_0),-\Gamma(s_1)\rangle\rangle}{\sinh g(s_0,s_1)}
= \frac{\langle\langle\Gamma'(s_0),\sinh g(s_0,s_1)\, \tau_0-  \cosh g(s_0,s_1)\, \Gamma(s_0)\rangle\rangle}{\sinh g(s_0,s_1)}\nonumber\\
&=&\langle\langle\Gamma'(s_0), \tau_0\rangle\rangle=\cos\psi_0.\label{eq:H1}
\end{eqnarray}
Analogously $\frac{\partial g}{\partial s_1}(s_0,s_1)=\langle\langle\Gamma'(s_1),-\tau_1\rangle\rangle=-\cos\psi_1.$
 \end{proof}
 
 Let $p=-\cos\psi\in (-1,1)$. Lemma \ref{lema:geradora} implies that the arclength $s$ and the tangent momentum $p$ are conjugated coordinates with generating function $g$ for the billiard map, or, 
 $$T_\Gamma(s_0,p_0)=(s_1,p_1) \Longleftrightarrow \frac{\partial g}{\partial s_0}=-p_0 ,\,\,\, \frac{\partial g}{\partial s_1}=p_1$$
 leading to the variational definition of billiards.
 
\begin{lemma}\label{lema:segunda}
Let $k_i$ be the geodesic curvature of $\Gamma$ at $s_i, i=0,1$. The second derivatives of $g$ are
\begin{eqnarray*}
\frac{\partial^2 g}{\partial s_i^2 } (s_0,s_1)&=& \left\{
\begin{array}{lcr}
   \frac{\sin^2\psi_i}{\tan g(s_0,s_1)} + k_i\sin\psi_i &\mbox{ in }& \S\\
  \frac{\sin^2\psi_i}{\tanh g(s_0,s_1)} + k_i\sin\psi_i&\mbox{ in }& \H
\end{array}
\right.\\
& &\\
\frac{\partial^2 g}{\partial s_0 \partial s_1}(s_0,s_1) &=& \left\{
\begin{array}{lcr}
   \frac{\sin\psi_0\sin{\psi_1}}{\sin g(s_0,s_1)} & \mbox{ in }& \S\\
  \frac{\sin\psi_0\sin{\psi_1}}{\sinh g(s_0,s_1)}  &\mbox{ in }& \H
\end{array}
\right.
\end{eqnarray*}
\end{lemma}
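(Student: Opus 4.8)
The plan is to differentiate the first-order identities from Lemma \ref{lema:geradora} a second time, working in each of the three models separately and using the intrinsic geometry of the geodesic joining $\Gamma(s_0)$ to $\Gamma(s_1)$. The key bookkeeping device is the moving frame $\{\tau_i,\nu_i\}$ along $\Gamma$ at $\Gamma(s_i)$, where $\tau_i$ is as in the proof of Lemma \ref{lema:geradora} and $\nu_i$ the corresponding (geodesic) normal; the Frenet-type equations on $S$ give $\Gamma''(s_i) = k_i \nu_i + (\text{a term along the ambient normal to } S)$, the last term being harmless because it is orthogonal to everything we pair against. Writing $\Gamma'(s_i) = \cos\psi_i\,\tau_i \pm \sin\psi_i\,\nu_i$ (sign depending on $i$, matching the angle convention), the three second derivatives reduce to computing $\partial \tau_0/\partial s_0$, $\partial\tau_0/\partial s_1$, etc.

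Concretely, for $S=\E$ I would start from \eqref{eq:E1}, i.e. $\partial g/\partial s_0 = \langle \Gamma'(s_0), (\Gamma(s_0)-\Gamma(s_1))/\|\Gamma(s_0)-\Gamma(s_1)\|\rangle$, and differentiate in $s_0$ and in $s_1$: the derivative of the unit vector $\tau_0 = (\Gamma(s_0)-\Gamma(s_1))/g$ decomposes into its component along $\tau_0$ (which dies against $\Gamma'(s_0)$ up to the $1/g$ factor producing the $\sin^2\psi_i/g$ term) and its component along $\Gamma'$, which contributes the curvature term $-k_i\sin\psi_i$ via $\langle\Gamma''(s_0),\tau_0\rangle$. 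For $S=\S$ I would differentiate the relation $\cos g = \langle\Gamma(s_0),\Gamma(s_1)\rangle$ twice, using $\langle\Gamma,\Gamma\rangle=1$, $\langle\Gamma',\Gamma\rangle=0$, $\langle\Gamma',\Gamma'\rangle=1$ and $\langle\Gamma'',\Gamma\rangle=-1$, together with the decomposition $\sin g\,\tau_i = \pm(\Gamma(s_{1-i}) - \cos g\,\Gamma(s_i))$ already used in \eqref{eq:S1}; the factors $\tan g$ and $\sin g$ appear precisely from differentiating $\cos g$ and dividing by $\sin g$. The hyperbolic case $S=\H$ is formally identical to the spherical one after the substitutions $\cos\to\cosh$, $\sin\to\sinh$, $\langle,\rangle\to\langle\langle,\rangle\rangle$ and the sign change $\langle\langle\Gamma,\Gamma\rangle\rangle=-1$, $\langle\langle\Gamma'',\Gamma\rangle\rangle=1$, so I would do $\S$ in detail and indicate that $\H$ runs the same way.

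For the mixed derivative $\partial^2 g/\partial s_0\partial s_1$ the curvature terms are absent because differentiating $\langle\Gamma'(s_0),\tau_0\rangle$ with respect to $s_1$ never hits $\Gamma''(s_0)$; only the $s_1$-dependence of $\tau_0$ survives, and pairing $\Gamma'(s_1)=\pm\sin\psi_1\,\nu_1 + \cos\psi_1\,\tau_1$ against it leaves exactly $\sin\psi_0\sin\psi_1$ over the appropriate trigonometric function of $g$. The main obstacle, and the only place real care is needed, is tracking the signs and the angle conventions (the orientation of $\tau_i$ versus the orientation of $\Gamma$, and the fact that $\psi_0$ is measured at the outgoing point while $\psi_1$ is at the incoming point), together with making sure the ambient-normal components of $\Gamma''$ are correctly discarded in the sphere and hyperboloid — in $\R^{2,1}$ one must check the sign of $\langle\langle N,N\rangle\rangle$ for the ambient normal $N$ so that it genuinely drops out of the inner products in play. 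Once the conventions are fixed, each of the six formulas is a short computation.
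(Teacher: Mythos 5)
Your proposal follows essentially the same route as the paper's proof: differentiate the first-order identities of Lemma \ref{lema:geradora}, work in the frame $\{\tau_i,\nu_i\}$ adapted to the chord geodesic, decompose $\Gamma'(s_i)$ and $\Gamma''(s_i)$ in that frame (discarding the ambient-normal component on $\S$ and $\H$), and treat the three geometries in parallel so that $g$, $\sin g$, $\sinh g$ appear in the denominators exactly as you describe. The only caveat is notational: $\Gamma''(s_i)$ is $k_i$ times the unit normal to $\Gamma$ (plus the ambient-normal term), not $k_i\nu_i$ with $\nu_i$ the chord normal, but your subsequent use of $\langle\Gamma''(s_0),\tau_0\rangle=\pm k_0\sin\psi_0$ shows you intend the correct decomposition.
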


\begin{proof}
Let  $\tau_i$ , $\eta_i$ and $\nu_i$ be the unitary tangent, normal and binormal vectors, respectively,  to the oriented geodesic joining  $\Gamma(s_0)$ to $\Gamma(s_1)$, at $\Gamma(s_i)$, seen as a curve in $\mathbb R^3$. When $S=\S$ or $\H$, since the geodesic is contained on a plane passing by the origin, $\eta_i=-\Gamma(s_i)$, $\nu_i$ is a constant unitary vector, normal to the plane, and $\{\tau_i,\nu_i\}$ is an orthonormal  basis for the tangent plane of $S$ at 
$\Gamma(s_i)$. For simplicity of notation we write just $g$ for $g(s_0,s_1)$.

 In the case $S=\S$ we differentiate (\ref{eq:S1}) getting
\begin{eqnarray*}
\frac{\partial^2 g}{\partial s_0^2}&=&\frac{-\langle\Gamma''(s_0), \Gamma(s_1)\rangle-\cos^2\psi_0\cos g}{\sin g}\\
&=&\frac{-\langle-\Gamma(s_0)+k_0\,\Gamma(s_0)\times\Gamma'(s_0), \cos g\,\Gamma(s_0)-\sin g\,\tau_0\rangle-\cos^2\psi_0\cos g}{\sin g}\\
&=& \frac{\cos g+k_0\sin\psi_0\sin g-\cos^2\psi_0\cos g}{\sin g}=\frac{\cos g\sin^2\psi_0}{\sin g} + k_0\sin\psi_0
\end{eqnarray*}
and 
\begin{eqnarray*}
\frac{\partial^2 g}{\partial s_0 \partial s_1}&=&\frac{-\langle\Gamma'(s_0), \Gamma'(s_1)\rangle+\cos\psi_0\cos\psi_1\cos g}{\sin g}\\
&=&\frac{-\langle\cos\psi_0\tau_0-\sin\psi_0\nu_0, \cos\psi_1\tau_1+\sin\psi_1\nu_1\rangle+\cos\psi_0\cos\psi_1\cos g}{\sin g}\\
&=&\frac{\sin\psi_0\sin\psi_1}{\sin g}
\end{eqnarray*}

When $S=\H$ we differentiate (\ref{eq:H1}) getting
\begin{eqnarray*}
\frac{\partial^2 g}{\partial s_0 \partial s_1}&=&\frac{-\langle\langle\Gamma'(s_0), \Gamma'(s_1)\rangle\rangle+\cos\psi_0\cos\psi_1\cosh g}{\sinh g}\\
&=&\frac{-\langle\langle-\Gamma(s_0)+k_0\,\Gamma(s_0)\times\Gamma'(s_0), \cosh g\,\Gamma(s_0)-\sinh g\,\tau_0\rangle\rangle-\cos^2\psi_0\cosh g}{\sinh g}\\
&=& \frac{\cosh g\sin^2\psi_0}{\sinh g} + k_0\sin\psi_0
\end{eqnarray*}
and 
\begin{eqnarray*}
\frac{\partial^2 g}{\partial s_0 \partial s_1}&=&\frac{-\langle\langle\Gamma'(s_0), \Gamma'(s_1)\rangle\rangle+\cos\psi_0\cos\psi_1\cosh g}{\sinh g}\\
&=&\frac{-\langle\langle\cos\psi_0\tau_0-\sin\psi_0\nu_0, \cos\psi_1\tau_1+\sin\psi_1\nu_1\rangle\rangle+\cos\psi_0\cos\psi_1\cosh g}{\sinh g}\\
&=&\frac{\sin\psi_0\sin\psi_1}{\sinh g}
\end{eqnarray*}

In both cases, the calculation of $\frac{\partial^2 g}{\partial s_1^2 }$ is analogous to $\frac{\partial^2 g}{\partial s_0^2 }$.\end{proof}

\subsection {Properties of the billiard map}

 In this subsection we will prove that
 
\begin{theorem}
 Let $\Gamma \subset S$  be a $C^j$ oval, $j\geq 2$.  The associated billiard map $T_\Gamma$ is a reversible, conservative, Twist,  $C^{j-1}$-diffeomorphism.
\end{theorem}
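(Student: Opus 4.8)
The plan is to verify the four properties---$C^{j-1}$-smoothness, reversibility, conservativeness (exactness/area-preservation), and the Twist condition---one at a time, leaning on the two lemmas already proved about the generating function $g$.

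\textbf{Smoothness.} First I would argue that $T_\Gamma$ is a $C^{j-1}$-diffeomorphism of $\cal C$. The idea is to use the implicit characterization $\frac{\partial g}{\partial s_0}(s_0,s_1)=-p_0$, $\frac{\partial g}{\partial s_1}(s_0,s_1)=p_1$ from Lemma \ref{lema:geradora}. Since $\Gamma$ is $C^j$, the distance function $d_S(\Gamma(s_0),\Gamma(s_1))$ is $C^{j}$ off the diagonal $s_0=s_1$ (the ambient distance formulas are analytic in $X,Y$ away from coincident/antipodal points, and strict geodesic convexity keeps us in the good range), hence $g$ is $C^j$ and $g_{s_0},g_{s_1}$ are $C^{j-1}$. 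To solve $g_{s_0}(s_0,s_1)=-p_0$ for $s_1$ as a function of $(s_0,p_0)$ via the implicit function theorem I need $\frac{\partial^2 g}{\partial s_0\partial s_1}\neq 0$; Lemma \ref{lema:segunda} gives $\frac{\partial^2 g}{\partial s_0\partial s_1}=\frac{\sin\psi_0\sin\psi_1}{\sigma(g)}$ with $\sigma=\mathrm{id},\sin,\sinh$ respectively, which is strictly positive for $\psi_0,\psi_1\in(0,\pi)$ and $g\neq 0$. This produces $s_1=s_1(s_0,p_0)$ of class $C^{j-1}$, and then $p_1=g_{s_1}(s_0,s_1(s_0,p_0))$ is $C^{j-1}$ as well; the inverse map is obtained symmetrically, so $T_\Gamma$ is a $C^{j-1}$-diffeomorphism. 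One must also check $T_\Gamma$ is well-defined and continuous up to the geometric fact that from an interior chord the next impact exists and is unique---this follows from geodesic convexity of $\Omega$ (every geodesic ray from a boundary point pointing inward meets $\Gamma$ again exactly once).

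\textbf{Reversibility and conservativeness.} Reversibility: let $R(s,p)=(s,-p)$ (equivalently $(s,\psi)\mapsto(s,\pi-\psi)$, reversing the direction of travel). The symmetry $g(s_0,s_1)=g(s_1,s_0)$ immediately gives $R\circ T_\Gamma\circ R=T_\Gamma^{-1}$ from the implicit equations: if $T_\Gamma(s_0,p_0)=(s_1,p_1)$ then $g_{s_0}(s_0,s_1)=-p_0,\ g_{s_1}(s_0,s_1)=p_1$, and reading the same two equations as determining the image of $(s_1,-p_1)$ shows $T_\Gamma(s_1,-p_1)=(s_0,-p_0)$. Conservativeness: $T_\Gamma$ is exact symplectic with respect to $ds\wedge dp$ because $g$ is a generating function---concretely, $p_1\,ds_1-p_0\,ds_0=dg(s_0,s_1)$ along the graph of $T_\Gamma$, so $T_\Gamma^*(p\,ds)-p\,ds$ is exact, hence $T_\Gamma^*(ds\wedge dp)=ds\wedge dp$. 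I would state this in the $(s,p)$ coordinates and note it is the standard ``twist map from a generating function'' package.

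\textbf{Twist.} Finally the Twist property: I need $\frac{\partial s_1}{\partial p_0}$ (equivalently $\frac{\partial s_1}{\partial \psi_0}$) to have a fixed sign. Differentiating $g_{s_0}(s_0,s_1(s_0,p_0))=-p_0$ in $p_0$ gives $\frac{\partial^2 g}{\partial s_0\partial s_1}\cdot\frac{\partial s_1}{\partial p_0}=-1$, so $\frac{\partial s_1}{\partial p_0}=-1\big/\frac{\partial^2 g}{\partial s_0\partial s_1}\neq 0$ with constant (negative) sign, since Lemma \ref{lema:segunda} shows $\frac{\partial^2 g}{\partial s_0\partial s_1}>0$ throughout $\cal C$. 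This is the monotone twist condition. I expect the main obstacle---or rather the part needing the most care---to be the smoothness/well-definedness near the boundary: controlling the behaviour of $g$ and its derivatives as $g(s_0,s_1)\to 0$ (the chord shrinks) and as $\psi_i\to 0,\pi$ (grazing), i.e. checking the map extends continuously to $\overline{\cal C}$ with $T_\Gamma|_{\psi=0,\pi}=\mathrm{id}$, and making sure the spherical case stays inside a single hemisphere so that no antipodal degeneracy of $d_S$ intervenes; strict geodesic convexity (Lemma \ref{convexcurve}) is exactly what rules these pathologies out, and the curvature terms $-k_i\sin\psi_i$ in Lemma \ref{lema:segunda} are what will be needed later for the Twist estimates but are not needed for the bare sign of the mixed partial.
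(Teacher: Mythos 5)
Your proposal is correct and, for three of the four properties, follows the same route as the paper: smoothness comes from the implicit function theorem applied to $\frac{\partial g}{\partial s_0}(s_0,s_1)=\cos\psi_0$ (equivalently $=-p_0$), using the non-vanishing of $\frac{\partial^2 g}{\partial s_0\partial s_1}$ from Lemma \ref{lema:segunda}, and the twist condition comes from differentiating that same implicit relation (the paper's Lemma \ref{lema:formulas}). For reversibility the paper argues geometrically (a trajectory can be travelled in both senses, giving $T_\Gamma^{-1}=I\circ T_\Gamma\circ I$), whereas you derive the same identity from the symmetry $g(s_0,s_1)=g(s_1,s_0)$ of the generating function; these are equivalent, yours being slightly more formal. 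The one genuinely different piece is conservativeness: the paper computes $\det DT_\Gamma$ explicitly from the entries in Lemma \ref{lema:DT} and reads off invariance of $\sin\psi\,ds\,d\psi$, while you invoke the exact-symplectic identity $p_1\,ds_1-p_0\,ds_0=dg$ to get preservation of $ds\wedge dp=\sin\psi\,ds\wedge d\psi$ without any computation. Your argument is shorter and explains \emph{why} the measure is preserved, but it does not produce the explicit matrix $DT_\Gamma$, which the paper needs later (for the perturbation argument in Proposition \ref{prop:dense} and the MacKay--Meiss stability discussion), so the explicit computation is not wasted effort in context. One small slip: with the paper's convention $g=-d_S<0$, the mixed partial $\frac{\sin\psi_0\sin\psi_1}{\sigma(g)}$ is negative, not positive, on $\cal C$ (for $\S$ note $\sin g<0$ since $-\pi<g<0$); this only flips the sign of the twist and does not affect the argument, since a nonzero mixed partial of constant sign is all that is needed.
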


The proof will follow directly from the lemmas below. As above, $s$ stands for the arclength parameter for $\Gamma$, $k_i$ is the geodesic curvature of $\Gamma$ at $s_i$, $d_S$ is the distance on $S$ and  $g(s_0,s_1)=- d_S(\Gamma(s_0),\Gamma(s_1))$.

\begin{lemma}
$T_\Gamma$ is invertible and reversible.
\end{lemma}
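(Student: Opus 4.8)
The plan is to establish invertibility and reversibility directly from the geometry of the billiard reflection, exploiting the time-reversal symmetry that is already visible in Lemma \ref{lema:geradora}. The key structural fact is that the generating function $g(s_0,s_1) = -d_S(\Gamma(s_0),\Gamma(s_1))$ is symmetric, $g(s_0,s_1) = g(s_1,s_0)$, since the geodesic distance is symmetric in its two arguments.

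First I would prove invertibility. Given $(s_0,\psi_0) \in {\cal C}$, the geodesic issuing from $\Gamma(s_0)$ in the direction making angle $\psi_0$ with $\Gamma'(s_0)$ enters the region $\Omega$ (because $\psi_0 \in (0,\pi)$ means the direction points strictly inward, using strict geodesic convexity of $\Gamma$) and, since $\Omega$ is bounded with geodesically convex boundary, this geodesic meets $\Gamma$ again at exactly one further point $\Gamma(s_1)$, at which it arrives with a well-defined angle $\psi_1 \in (0,\pi)$; this is what defines $T_\Gamma(s_0,\psi_0) = (s_1,\psi_1)$. To invert, I use the involution $I(s,\psi) = (s, \pi - \psi)$ on ${\cal C}$, which reverses the direction of motion at a given impact point. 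The claim is $T_\Gamma^{-1} = I \circ T_\Gamma \circ I$. Indeed, if $T_\Gamma(s_0,\psi_0) = (s_1,\psi_1)$, the billiard trajectory goes from $\Gamma(s_0)$ to $\Gamma(s_1)$ along a geodesic segment; reversing the velocity at $\Gamma(s_1)$ (applying $I$) sends the particle back along the same geodesic segment to $\Gamma(s_0)$, arriving with the reversed velocity there, i.e. $T_\Gamma(I(s_1,\psi_1)) = I(s_0,\psi_0)$. Equivalently $T_\Gamma \circ I \circ T_\Gamma = I$, so $T_\Gamma$ is a bijection of ${\cal C}$ with $T_\Gamma^{-1} = I \circ T_\Gamma \circ I$, and this is precisely the statement that $T_\Gamma$ is reversible with respect to the involution $I$.

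One should check that $I$ is genuinely an involution ($I^2 = \mathrm{id}$, clear) and that the reflection law is symmetric under reversal: the elastic reflection condition "equal angles with the tangent" is manifestly invariant under swapping incoming and outgoing rays, which is exactly why reversing the velocity at $\Gamma(s_1)$ produces an admissible billiard trajectory. I would phrase the argument in terms of the geodesic segment $[\Gamma(s_0),\Gamma(s_1)]$ together with the angles $\psi_0,\psi_1$ it makes with $\Gamma$ at its endpoints, noting that this data is symmetric in the two endpoints precisely because $d_S$ is symmetric — one can even read this off Lemma \ref{lema:geradora}, since interchanging the roles of $s_0$ and $s_1$ in $\partial g/\partial s_0 = \cos\psi_0$, $\partial g/\partial s_1 = -\cos\psi_1$ together with $g(s_0,s_1)=g(s_1,s_0)$ forces the reversed trajectory to obey the same relations with $\psi_0,\psi_1$ swapped and complemented.

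The main obstacle is not algebraic but geometric: verifying that the billiard map is actually well-defined, i.e. that a geodesic entering $\Omega$ from a boundit point meets the boundary again in exactly one point and transversally, so that $\psi_1$ is well-defined and lies in $(0,\pi)$. This uses the hypothesis that $\Gamma$ is an oval — strict geodesic convexity (via Lemma \ref{convexcurve}) guarantees that every geodesic meets $\Gamma$ in at most two points and that the interior direction cone at each boundary point is exactly the half of directions with $\psi \in (0,\pi)$ — together with the fact that $\Omega$ is a bounded subset of the geodesically convex surface $S$, which forces the geodesic to exit $\Omega$ and hence to hit $\Gamma$ again. Once well-definedness and the inward/outward dichotomy are pinned down, the involution identity $T_\Gamma \circ I \circ T_\Gamma = I$ is essentially immediate from the symmetry of the segment-plus-angles data, and invertibility follows formally. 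The smoothness and Twist claims are deferred to the subsequent lemmas, so they need not be addressed here.
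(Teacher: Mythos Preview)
Your proposal is correct and follows essentially the same approach as the paper: introduce the involution $I(s,\psi)=(s,\pi-\psi)$, observe from the geometric time-reversal symmetry of billiard trajectories that $T_\Gamma\circ I\circ T_\Gamma = I$, and conclude $T_\Gamma^{-1}=I\circ T_\Gamma\circ I$. Your version is considerably more detailed---you discuss well-definedness of the map and tie the symmetry back to the generating function---whereas the paper dispatches the lemma in three sentences, but the core argument is identical.
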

\begin{proof} Any trajectory of the billiard problem can be travelled in both senses.  So, if $T_\Gamma(s_i,\psi_i)=(s_{i+1},\psi_{i+1})$ then $T_\Gamma^{-1}(s_i,\pi-\psi_i)=(s_{i-1},\pi-\psi_{i-1})$.

 Let $I$ be the involution on ${\cal C}$ given by $I(s,\psi)=(s,\pi-\psi)$. Clearly $I^{-1}=I$.
 
 We have then that $T_\Gamma^{-1}=I\circ T_\Gamma\circ I$ or $I\circ T_\Gamma^{-1}=T_\Gamma\circ I$, i.e., $T_\Gamma$ is reversible. \end{proof}
 
 \begin{lemma}\label{lemma:difeo}
 $T_\Gamma$ is a $C^{j-1}$ diffeomorphism.
 \end{lemma}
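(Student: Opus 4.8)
The plan is to derive the claim from the explicit formulas for the partial derivatives of the generating function $g$ obtained in Lemmas \ref{lema:geradora} and \ref{lema:segunda}, via the implicit function theorem. Recall that $T_\Gamma(s_0,\psi_0)=(s_1,\psi_1)$ is characterized by $\frac{\partial g}{\partial s_0}(s_0,s_1)=\cos\psi_0$ together with $\frac{\partial g}{\partial s_1}(s_0,s_1)=-\cos\psi_1$. First I would show that $s_1$ depends $C^{j-1}$-smoothly on $(s_0,\psi_0)$: consider the map $F(s_0,\psi_0,s_1)=\frac{\partial g}{\partial s_1}(s_0,s_1)+\cos\psi_1$... no — better, use the first equation $F(s_0,\psi_0,s_1)=\frac{\partial g}{\partial s_0}(s_0,s_1)-\cos\psi_0=0$ to solve for $s_1$. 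Since $\Gamma$ is $C^j$, the distance function $d_S(\Gamma(s_0),\Gamma(s_1))$ and hence $g$ is $C^j$ off the diagonal, so $F$ is $C^{j-1}$. The partial derivative $\frac{\partial F}{\partial s_1}=\frac{\partial^2 g}{\partial s_0\partial s_1}$ equals, by Lemma \ref{lema:segunda}, $\frac{\sin\psi_0\sin\psi_1}{g}$ (resp. with $\sin g$, $\sinh g$ in the denominator), which is strictly negative because $\psi_0,\psi_1\in(0,\pi)$, $g<0$, and $\sin g<0$, $\sinh g<0$ on the relevant range of arguments. Hence by the implicit function theorem $s_1=s_1(s_0,\psi_0)$ is a well-defined $C^{j-1}$ function near any point.

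Next I would recover $\psi_1$ as a $C^{j-1}$ function: from the second relation $\cos\psi_1=-\frac{\partial g}{\partial s_1}(s_0,s_1(s_0,\psi_0))$, the right-hand side is $C^{j-1}$; since $\psi_1\in(0,\pi)$ where $\cos$ is a diffeomorphism onto $(-1,1)$ with nonvanishing derivative $-\sin\psi_1\neq0$, we get $\psi_1=\arccos\bigl(-\frac{\partial g}{\partial s_1}\bigr)$ is $C^{j-1}$ in $(s_0,\psi_0)$. This shows $T_\Gamma$ is $C^{j-1}$. The same argument applied to $T_\Gamma^{-1}$ — or, more economically, invoking the already-proved reversibility identity $T_\Gamma^{-1}=I\circ T_\Gamma\circ I$ with $I$ the (real-analytic) involution $I(s,\psi)=(s,\pi-\psi)$ — shows $T_\Gamma^{-1}$ is also $C^{j-1}$, so $T_\Gamma$ is a $C^{j-1}$ diffeomorphism of $\cal C$.

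Two technical points deserve care, and the second is the main obstacle. First, one must check that the billiard map is globally well defined and sends $\cal C$ into $\cal C$ — i.e. that a geodesic leaving $\Gamma(s_0)$ into $\Omega$ with interior angle $\psi_0\in(0,\pi)$ meets $\Gamma$ again transversally at a unique point $\Gamma(s_1)$ with $\psi_1\in(0,\pi)$ — which follows from geodesic convexity of $\Omega$ (Lemma \ref{convexcurve}) together with geodesic convexity of the ambient surface $S$; this also guarantees $g$ stays bounded away from $0$ so the denominators above never vanish. Second, and this is the delicate part, $g(s_0,s_1)$ fails to be differentiable on the diagonal $s_0=s_1$, since $d_S$ has a conical singularity there; so the formulas of Lemma \ref{lema:segunda} and the argument above are only valid on the open set $\{s_0\neq s_1\}$. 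One must therefore argue that the billiard map never produces $s_1=s_0$ — again a consequence of strict geodesic convexity, since a geodesic tangent to $\Gamma$ meets $\Gamma$ only once, and a nontangent chord has distinct endpoints — and that near the "short chord" limit $s_1\to s_0$ the relevant quantities ($\sin\psi_0/g$, etc.) still behave well; since the billiard map is defined on all of $\cal C$ with $\psi_0$ bounded away from $0$ and $\pi$ on compact subsets, $g$ is bounded away from $0$ there and uniform $C^{j-1}$ bounds hold on compacta, giving smoothness on all of $\cal C$.
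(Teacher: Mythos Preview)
Your proposal is correct and follows essentially the same route as the paper: apply the implicit function theorem to $F(s_0,\psi_0,s_1)=\frac{\partial g}{\partial s_0}(s_0,s_1)-\cos\psi_0$, using Lemma~\ref{lema:segunda} to see that $\frac{\partial^2 g}{\partial s_0\partial s_1}\neq 0$, then recover $\psi_1$ via $\arccos$, and finally invoke the reversibility identity $T_\Gamma^{-1}=I\circ T_\Gamma\circ I$ for the inverse. Your added discussion of the diagonal singularity of $g$ and the global well-definedness of the map is more careful than the paper (which passes over these points silently by restricting to disjoint neighbourhoods $V_0,V_1$ of $\overline s_0,\overline s_1$), but the core argument is the same.
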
 
 \begin{proof}
 Let $T_\Gamma(\overline s_0,\overline \psi_0)=(\overline s_1,\overline \psi_1)$ and $V_0$ and $V_1$ be two disjoint open intervals containing $\overline s_0$ and $\overline s_1$ respectively. We define 
 $$G:V_0\times V_1\times (0,\pi)\mapsto \mathbb R, \quad G(s_0,s_1,\psi_0)=\frac{\partial g}{\partial s_0}(s_0,s_1)-\cos\psi_0.$$
 
$G$ is a $C^{j-1}$ function, since $\Gamma$ and $g$ are $C^j$.  Then, by lemma \ref{lema:geradora}, $G(\overline s_0, \overline s_1,\overline \psi_0)=0$  and $\frac{\partial G}{\partial s_1}(s_0,s_1)=\frac{\partial^2 g}{\partial s_0 \partial s_1}(\overline s_0,\overline s_1)\neq 0$ by lemma \ref{lema:segunda}, since $\overline \psi_0,\overline \psi_1\in (0,\pi)$. So we can locally define a $C^{j-1}$ function $s_1=s_1(s_0,\psi_0)$ such that $G(s_0,s_1(s_0,\psi_0),\psi_0)=0$.

Taking now $\psi_1(s_0,\psi_0)=\arccos(-\frac{\partial g}{\partial s_1}(s_0,s_1(s_0,\psi_0)))$ we conclude that $T_\Gamma(s_0,\psi_0)=(s_1(s_0,\psi_0),\psi_1(s_0,\psi_0))$ is a $C^{j-1}$ function. 

As $T_\Gamma$ is invertible, with $T_\Gamma^{-1}=I\circ T_\Gamma\circ I$, we conclude that $T_\Gamma$ is a $C^{j-1}$ diffeomorphism. \end{proof}

Differentiating the expressions
$\cos\psi_0=\frac{\partial g}{\partial s_0}(s_0,s_1(s_0,\psi_0))$ and 
$\cos\psi_1(s_0,\psi_0)=-\frac{\partial g}{\partial s_1}(s_0,s_1(s_0,\psi_0))$
we obtain
\begin{lemma}\label{lema:formulas}
\begin{eqnarray*}\label{eq:twist}
\frac{\partial^2g}{\partial s_0^2}+\frac{\partial^2g}{\partial s_0\partial s_1}\frac{\partial s_1}{\partial s_0}=0 &\quad\quad\quad\quad\quad\quad&
\frac{\partial^2g}{\partial s_0\partial s_1}\frac{\partial s_1}{\partial \psi_0}=-\sin\psi_0\\
\frac{\partial^2g}{\partial s_0\partial s_1}+\frac{\partial^2g}{\partial s_1^2}\frac{\partial s_1}{\partial s_0}=
\sin\psi_1\frac{\partial \psi_1}{\partial s_0}&\quad\quad\quad\quad\quad\quad\quad&
\frac{\partial^2g}{\partial s_1^2}\frac{\partial s_1}{\partial \psi_0}=\sin\psi_1\frac{\partial \psi_1}{\partial\psi_0}
\end{eqnarray*}
\end{lemma}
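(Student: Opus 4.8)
The proof amounts to differentiating the two defining relations of $T_\Gamma$ and applying the chain rule, so I will lay out the structure rather than carry out the bookkeeping. The starting point is the pair of identities
$$\cos\psi_0=\frac{\partial g}{\partial s_0}\bigl(s_0,s_1(s_0,\psi_0)\bigr),\qquad \cos\psi_1(s_0,\psi_0)=-\frac{\partial g}{\partial s_1}\bigl(s_0,s_1(s_0,\psi_0)\bigr),$$
which hold identically wherever $T_\Gamma$ is defined: the first is the relation $G\equiv 0$ used in the proof of Lemma~\ref{lemma:difeo} (i.e. Lemma~\ref{lema:geradora} evaluated along $s_1=s_1(s_0,\psi_0)$), and the second is the definition of $\psi_1(s_0,\psi_0)$ given there. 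Throughout, $(s_0,\psi_0)$ are the independent coordinates while $s_1=s_1(s_0,\psi_0)$ and $\psi_1=\psi_1(s_0,\psi_0)$ are the $C^{j-1}$ functions constructed in that proof; since $\Gamma$, hence $g$, is $C^{j}$ with $j\ge 2$, all second derivatives of $g$ are continuous and $\frac{\partial^2 g}{\partial s_0\partial s_1}=\frac{\partial^2 g}{\partial s_1\partial s_0}$.

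The plan is then to differentiate each of the two identities once with respect to $s_0$ and once with respect to $\psi_0$. Differentiating the first identity with respect to $s_0$, the left side is independent of $s_0$ and contributes $0$, while the right side, via the chain rule through $s_1(s_0,\psi_0)$, contributes $\frac{\partial^2 g}{\partial s_0^2}+\frac{\partial^2 g}{\partial s_0\partial s_1}\frac{\partial s_1}{\partial s_0}$, giving the first equation of the statement. Differentiating the first identity with respect to $\psi_0$, the left side now contributes $\frac{d}{d\psi_0}\cos\psi_0$ and the only $\psi_0$-dependence on the right is through $s_1$, producing $\frac{\partial^2 g}{\partial s_0\partial s_1}\frac{\partial s_1}{\partial\psi_0}$; collecting terms yields the second equation. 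Differentiating the second identity with respect to $s_0$ and then to $\psi_0$, one uses that its left side depends on $(s_0,\psi_0)$ through $\psi_1(s_0,\psi_0)$ — so its derivatives carry a factor $\sin\psi_1$ times the corresponding derivative of $\psi_1$ — while its right side is handled by the chain rule through $s_1(s_0,\psi_0)$ as before; after collecting terms these give the third and fourth equations.

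I do not expect a genuine obstacle: the content of the lemma is precisely these four chain-rule identities. The only points requiring care are to treat $s_0$ and $\psi_0$ as strictly independent variables (so that $\partial\psi_0/\partial s_0=0$ and $\partial s_0/\partial\psi_0=0$), to differentiate only through the composites $s_1(s_0,\psi_0)$ and $\psi_1(s_0,\psi_0)$, and to invoke the symmetry of the mixed partial of $g$. These relations are exactly what the remaining lemmas of the subsection feed on: the $\psi_0$-derivative relations, together with $\frac{\partial^2 g}{\partial s_0\partial s_1}\neq 0$ from Lemma~\ref{lema:segunda}, give the monotonicity of $s_1$ in $\psi_0$ (the Twist property), while the whole set yields the Jacobian identity behind conservativity.
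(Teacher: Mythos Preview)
Your proposal is correct and matches the paper's approach exactly: the paper's entire proof is the one-line preamble ``Differentiating the expressions $\cos\psi_0=\frac{\partial g}{\partial s_0}(s_0,s_1(s_0,\psi_0))$ and $\cos\psi_1(s_0,\psi_0)=-\frac{\partial g}{\partial s_1}(s_0,s_1(s_0,\psi_0))$ we obtain'', and you have simply spelled out those four chain-rule differentiations in detail.
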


\begin{lemma}
$T_\Gamma$ is a Twist map.
\end{lemma}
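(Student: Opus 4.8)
The plan is to show that the Twist condition---say, $\dfrac{\partial s_1}{\partial \psi_0}\neq 0$ with a fixed sign on all of $\cal C$---follows immediately from the second of the relations in equation~(\ref{eq:twist}) of Lemma~\ref{lema:formulas}, together with the sign of the mixed partial $\dfrac{\partial^2 g}{\partial s_0\partial s_1}$ computed in Lemma~\ref{lema:segunda}. First I would recall that a diffeomorphism of the cylinder $\cal C$ with coordinates $(s,\psi)$ is a Twist map when the image of a vertical line $\{s_0=\text{const}\}$ is a graph over the $s_1$-axis, which is guaranteed by $\dfrac{\partial s_1}{\partial \psi_0}$ being everywhere nonzero (equivalently, of constant sign, since $\cal C$ and the derivative are continuous).

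The key computation is then a one-liner: from $\dfrac{\partial^2 g}{\partial s_0\partial s_1}\,\dfrac{\partial s_1}{\partial \psi_0}=\sin\psi_0$ we get
$$\frac{\partial s_1}{\partial \psi_0}=\frac{\sin\psi_0}{\frac{\partial^2 g}{\partial s_0\partial s_1}}.$$
By Lemma~\ref{lema:segunda}, in each of the three geometries $\dfrac{\partial^2 g}{\partial s_0\partial s_1}$ equals $\sin\psi_0\sin\psi_1$ divided by $g(s_0,s_1)$, $\sin g(s_0,s_1)$, or $\sinh g(s_0,s_1)$ respectively. Since $\psi_0,\psi_1\in(0,\pi)$ the factors $\sin\psi_0,\sin\psi_1$ are strictly positive, and since $\Gamma$ is contained in a geodesically convex region the geodesic distance $d_S(\Gamma(s_0),\Gamma(s_1))$ is strictly positive and, in the spherical case, strictly less than $\pi$ (indeed bounded by the diameter of the hemisphere), so that $g<0$, $\sin g>0$ (after accounting for the sign of $g$, one uses $\sin(-d_S)=-\sin d_S$), and $\sinh g<0$ are all nonzero with a definite sign. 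Hence $\dfrac{\partial^2 g}{\partial s_0\partial s_1}\neq 0$ with a fixed sign, and therefore $\dfrac{\partial s_1}{\partial \psi_0}=\dfrac{\sin\psi_0}{\partial^2 g/\partial s_0\partial s_1}$ is everywhere of constant sign on $\cal C$.

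The only genuine subtlety---and the step I would treat with the most care---is verifying that $d_S(\Gamma(s_0),\Gamma(s_1))$ stays in the range where the relevant trigonometric factor ($g$, $\sin g$, or $\sinh g$) is nonzero and of constant sign; concretely, in the spherical case one must know that the whole oval, being contained in a geodesically convex subset of the open hemisphere, has all pairwise geodesic distances strictly between $0$ and $\pi$, so $\sin g$ never vanishes. Once that is in hand, I would conclude by noting that $\dfrac{\partial s_1}{\partial \psi_0}$ having constant nonzero sign is exactly the Twist property, and, invoking reversibility ($T_\Gamma^{-1}=I\circ T_\Gamma\circ I$) if one also wants the twist condition for the inverse, the lemma follows. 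Combined with the two preceding lemmas (invertibility/reversibility, $C^{j-1}$-diffeomorphism) and with conservativity---which is the statement that $ds\wedge dp=-d(\partial_{s_0}g)\wedge ds_0 - \ldots$ is preserved, a standard consequence of $g$ being a generating function, proved by an exact-symplectic argument using Lemma~\ref{lema:geradora}---this yields the Theorem.
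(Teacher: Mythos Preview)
Your approach is correct and is essentially the paper's own proof: the paper simply invokes equation~(\ref{eq:twist}) together with Lemma~\ref{lema:segunda} and the fact that $\psi\in(0,\pi)$ to conclude $\dfrac{\partial s_1}{\partial\psi_0}\neq 0$. You spell out more carefully than the paper why the denominators $g$, $\sin g$, $\sinh g$ are nonzero (in particular the spherical bound $0<d_S<\pi$), and your closing remarks on conservativity drift into the content of the next lemma rather than this one, but the core argument is the same.
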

\begin{proof}
By lemmas \ref{lema:segunda} and \ref{lema:formulas} and remembering that  $\psi\in (0,\pi)$ and $g<0$, we have that  $\frac{\partial s_1}{\partial \psi_0}>0$ and $T_\Gamma$ has the Twist property.\end{proof}

Using the formulas of lemmas \ref{lema:formulas} and \ref{lema:segunda}, we obtain the derivative of the billiard map as:
\begin{lemma}\label{lema:DT}
$DT_\Gamma(s_0,\psi_0)=\left( 
\begin{array}{cc}
 \frac{\partial s_1}{\partial s_0}&  \frac{\partial s_1}{\partial \psi_0} \\
 \frac{\partial \psi_1}{\partial s_0}&  \frac{\partial \psi_1}{\partial\psi_0}                                              
  \end{array}\right)$ where
  \begin{itemize}
   \item in $\S$
  $$
  \frac{\partial s_1}{\partial s_0}= \frac{-k_0\sin g-\sin\psi_0\cos g}{\sin\psi_1}, \ \ 
   \frac{\partial s_1}{\partial \psi_0} =\frac{-\sin g}{\sin\psi_1}, \ \  \frac{\partial \psi_1}{\partial\psi_0} =  \frac{-\sin\psi_1\cos g-k_1\sin g}{\sin\psi_1}$$
 $$\frac{\partial \psi_1}{\partial s_0}= \frac{-k_0\sin\psi_1\cos g+\sin\psi_0\sin\psi_1\sin g-k_0k_1\sin g-k_1\sin\psi_0\cos g}{\sin\psi_1}$$
   \item in $\H$
 $$
  \frac{\partial s_1}{\partial s_0}= \frac{-k_0\sinh g-\sin\psi_0\cosh g}{\sin\psi_1},\ \ 
   \frac{\partial s_1}{\partial \psi_0} = -\frac{\sinh g}{\sin\psi_1},\ \  \frac{\partial \psi_1}{\partial\psi_0} =  \frac{-\sin\psi_1\cosh g-k_1\sinh g}{\sin\psi_1}$$
$$ \frac{\partial \psi_1}{\partial s_0}= \frac{-k_0\sin\psi_1\cosh g-\sin\psi_0\sin\psi_1\sinh g-k_0k_1\sinh g-k_1\sin\psi_0\cosh g}{\sin\psi_1}$$
  \end{itemize}
\end{lemma}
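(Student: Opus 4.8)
The plan is to read the four entries of $DT_\Gamma$ directly off the four scalar identities already assembled in Lemma~\ref{lema:formulas}, which express everything in terms of the second derivatives of the generating function, and then to substitute into them the closed forms of those second derivatives supplied by Lemma~\ref{lema:segunda}; everything after that is algebra. The one point to check before dividing is that $\frac{\partial^2 g}{\partial s_0\partial s_1}\neq 0$: by Lemma~\ref{lema:segunda} this mixed partial is $\sin\psi_0\sin\psi_1$ divided by $g$, by $\sin g$, or by $\sinh g$ according to the geometry, and since $\psi_0,\psi_1\in(0,\pi)$ and $\Gamma(s_0)\neq\Gamma(s_1)$ --- with $0<d_S(\Gamma(s_0),\Gamma(s_1))<\pi$ in the spherical model, two points of an open hemisphere never being antipodal --- every factor is nonzero. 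No analytic issue arises: the differentiability of $(s_0,\psi_0)\mapsto(s_1,\psi_1)$ was already established in Lemma~\ref{lemma:difeo}, so all the partial derivatives in play exist and the relations of Lemma~\ref{lema:formulas} are legitimate.

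I would perform the extraction in this order. The second identity in~(\ref{eq:twist}) gives $\frac{\partial s_1}{\partial\psi_0}=\sin\psi_0\big/\frac{\partial^2 g}{\partial s_0\partial s_1}$, which is at once $\frac{g}{\sin\psi_1}$, $\frac{\sin g}{\sin\psi_1}$ or $\frac{\sinh g}{\sin\psi_1}$. The first identity gives $\frac{\partial s_1}{\partial s_0}=-\frac{\partial^2 g}{\partial s_0^2}\big/\frac{\partial^2 g}{\partial s_0\partial s_1}$; here the $\frac{\sin^2\psi_0}{g}$ summand of the numerator (and its $\frac{\cos g}{\sin g}\sin^2\psi_0$, $\frac{\cosh g}{\sinh g}\sin^2\psi_0$ analogues) cancels against the denominator, leaving $\frac{k_0 g-\sin\psi_0}{\sin\psi_1}$ and its counterparts. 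The fourth identity then gives $\frac{\partial\psi_1}{\partial\psi_0}=\frac{1}{\sin\psi_1}\,\frac{\partial^2 g}{\partial s_1^2}\,\frac{\partial s_1}{\partial\psi_0}$, into which one feeds the value of $\frac{\partial s_1}{\partial\psi_0}$ from the first step. Finally the third identity gives $\frac{\partial\psi_1}{\partial s_0}=\frac{1}{\sin\psi_1}\Big(\frac{\partial^2 g}{\partial s_0\partial s_1}+\frac{\partial^2 g}{\partial s_1^2}\,\frac{\partial s_1}{\partial s_0}\Big)$, using the value of $\frac{\partial s_1}{\partial s_0}$ from the second step.

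In each geometry the simplification is mechanical. It is worth recording, though, where the compact final forms come from: when $\frac{\partial^2 g}{\partial s_1^2}\,\frac{\partial s_1}{\partial s_0}$ is expanded inside the expression for $\frac{\partial\psi_1}{\partial s_0}$, the cross terms combine with $\frac{\partial^2 g}{\partial s_0\partial s_1}$ through the identities $\cos^2 g+\sin^2 g=1$ in the spherical case and $\cosh^2 g-\sinh^2 g=1$ in the hyperbolic one (in the Euclidean case they cancel outright), and this is exactly what clears all the denominators and leaves the polynomials in $k_0,k_1$ displayed in the statement. Concretely I would carry out the Euclidean computation in full and then indicate the purely formal modifications --- replacing the relevant occurrences of $g$ by $\sin g$ or $\sinh g$ and of $\frac1g$ by $\frac{\cos g}{\sin g}$ or $\frac{\cosh g}{\sinh g}$ --- for the spherical and hyperbolic cases, checking the signs against $\cos^2 g+\sin^2 g=1$ and $\cosh^2 g-\sinh^2 g=1$.

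The only obstacle here is bookkeeping, not insight: one must keep straight the sign conventions $\frac{\partial g}{\partial s_0}=\cos\psi_0$ and $\frac{\partial g}{\partial s_1}=-\cos\psi_1$ inherited from Lemma~\ref{lema:geradora}, and remember that $\psi_1$ is itself a function of $(s_0,\psi_0)$, so the chain rule underlying Lemma~\ref{lema:formulas} must be applied consistently. There is nothing deeper to overcome.
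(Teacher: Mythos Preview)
Your proposal is correct and follows exactly the route the paper takes: the paper's entire proof is the single sentence ``Using the formulas of lemmas~\ref{lema:formulas} and~\ref{lema:segunda}, we obtain the derivative of the billiard map,'' and your write-up is simply a careful elaboration of that computation, including the nonvanishing of $\frac{\partial^2 g}{\partial s_0\partial s_1}$ and the Pythagorean/hyperbolic identities that collapse the cross terms in $\frac{\partial\psi_1}{\partial s_0}$.
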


Calculating now the determinant of $DT_\Gamma$ it is easy to see that
\begin{lemma}
$T_\Gamma$ preserves the measure $d\mu=\sin\psi ds d\psi$.
\end{lemma}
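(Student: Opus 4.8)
The plan is to read $DT_\Gamma$ off Lemma \ref{lema:DT} --- or, more economically, straight from the implicit relations of Lemma \ref{lema:formulas} --- compute its determinant, and verify that
$$\det DT_\Gamma(s_0,\psi_0)=-\frac{\sin\psi_0}{\sin\psi_1}.$$
The conclusion is then immediate from the standard criterion for invariance of a density: a diffeomorphism $T$ of the cylinder ${\cal C}$ preserves $\rho(s,\psi)\,ds\,d\psi$ exactly when $\rho(T(s,\psi))\,|\det DT(s,\psi)|=\rho(s,\psi)$. With $\rho=\sin\psi$ and $(s_1,\psi_1)=T_\Gamma(s_0,\psi_0)$ this reads $\sin\psi_1\cdot\frac{\sin\psi_0}{\sin\psi_1}=\sin\psi_0$, which holds identically; equivalently, since $\sin\psi>0$ on ${\cal C}$, the change of variables gives $\sin\psi_1\,ds_1\,d\psi_1=\sin\psi_0\,ds_0\,d\psi_0$. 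Only $|\det DT_\Gamma|$ enters, so the minus sign is harmless.

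For the determinant I would avoid splitting into the three geometries and work from Lemma \ref{lema:formulas}. Writing $g_{00}=\partial^2g/\partial s_0^2$, $g_{01}=\partial^2g/\partial s_0\partial s_1$, $g_{11}=\partial^2g/\partial s_1^2$, Lemma \ref{lema:segunda} together with $\psi_0,\psi_1\in(0,\pi)$ gives $g_{01}\neq0$, so the four identities of Lemma \ref{lema:formulas} solve for the entries of $DT_\Gamma$:
\begin{eqnarray*}
\frac{\partial s_1}{\partial s_0}&=&-\frac{g_{00}}{g_{01}},\qquad \frac{\partial s_1}{\partial\psi_0}=\frac{\sin\psi_0}{g_{01}},\\
\sin\psi_1\,\frac{\partial\psi_1}{\partial s_0}&=&\frac{g_{01}^2-g_{00}g_{11}}{g_{01}},\qquad \sin\psi_1\,\frac{\partial\psi_1}{\partial\psi_0}=\frac{g_{11}\sin\psi_0}{g_{01}}.
\end{eqnarray*}
Substituting,
$$\det DT_\Gamma=\frac{\partial s_1}{\partial s_0}\frac{\partial\psi_1}{\partial\psi_0}-\frac{\partial s_1}{\partial\psi_0}\frac{\partial\psi_1}{\partial s_0}=\frac{\sin\psi_0}{g_{01}^2\,\sin\psi_1}\left(-g_{00}g_{11}-(g_{01}^2-g_{00}g_{11})\right)=-\frac{\sin\psi_0}{\sin\psi_1},$$
the $g_{00}g_{11}$ terms cancelling. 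If one prefers to check this case by case from the explicit matrices of Lemma \ref{lema:DT}, exactly the same cancellation occurs: all geodesic-curvature terms $k_0$, $k_1$, $k_0k_1$ drop out, and one only needs $\cos^2g+\sin^2g=1$ on $\S$ and $\cosh^2g-\sinh^2g=1$ on $\H$.

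A coordinate-free variant bypasses $DT_\Gamma$ altogether: Lemma \ref{lema:geradora} says that on all of ${\cal C}$ the arclength $s$ and the momentum $p=-\cos\psi$ are conjugate with generating function $g$, so the $1$-form $\lambda=p\,ds$ satisfies $T_\Gamma^*\lambda-\lambda=dG$ with $G(s_0,p_0)=g(s_0,s_1(s_0,p_0))$ a single-valued function; hence $T_\Gamma^*(d\lambda)=d\lambda$, i.e. $T_\Gamma$ preserves the area form $dp\wedge ds=\sin\psi\,d\psi\wedge ds$, which is $d\mu$ up to sign. Since $DT_\Gamma$ has just been exhibited, I would present the determinant version.

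There is no real obstacle: the computation is routine. The only points worth a word are that $g_{01}\neq0$ everywhere on ${\cal C}$ --- this is exactly where $\psi\in(0,\pi)$ enters, via Lemma \ref{lema:segunda} --- which legitimises the divisions above; and, in the symplectic variant, that $G$ is genuinely single-valued on the cylinder (true because the billiard chord joining $\Gamma(s_0)$ to $\Gamma(s_1)$ is never degenerate), so that $T_\Gamma^*\lambda-\lambda$ is honestly exact rather than merely closed.
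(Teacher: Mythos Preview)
Your proposal is correct and follows the same approach as the paper, which simply states that the result follows by calculating $\det DT_\Gamma$ from Lemma \ref{lema:DT}. Your unified computation via the implicit relations of Lemma \ref{lema:formulas} is a clean way to handle all three geometries at once rather than case by case, and the symplectic alternative via the generating function is a valid coordinate-free variant; both are welcome additions beyond the paper's one-line remark.
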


\section{Periodic Orbits}

The oval billiard map $T_\Gamma$ is then a conservative reversible discrete 2-dimensional dynamical system, defined by a $C^{j-1}$-Twist map, $j\geq 2$. 
To each $n$-periodic orbit of $T_\Gamma$ is associated a closed geodesic polygon with vertices on the oval $\Gamma$. Among them we distinguish the Birkhoff periodic orbits of type $(m,n)$, the $n$-periodic orbits such that the corresponding trajectory winds $m$ times around $\Gamma$ before closing, meaning that the orbit has rotation number $m/n$.
The classical result for Birkhoff periodic orbits of Twist maps (see, for instance, \cite{kat}, page 356)  applied to our billiards\footnote{ For oval planar billiards, this result was proven by Birkhoff in \cite{bir}. } states that:

\begin{theorem}
 Given relatively primes $m$ and $n$,  $n\geq2$ and $0<m<n$, there exist at least two Birkhoff orbits of type $(m,n)$ for the oval billiard map $T_\Gamma$.
 \end{theorem}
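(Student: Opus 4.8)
The plan is to obtain the statement from the classical Birkhoff/Poincar\'e--Birkhoff theorem for exact area-preserving monotone twist maps (the version quoted in \cite{kat}, page~356, together with its variational proof). The lemmas of the previous subsection already supply almost all the hypotheses: $T_\Gamma$ is a $C^{j-1}$ diffeomorphism of the cylinder $\mathcal C=\R/l\mathbb Z\times(0,\pi)$ preserving $d\mu=\sin\psi\,ds\,d\psi$; in the coordinates $(s,p)$ with $p=-\cos\psi$ this measure is $ds\,dp$ and Lemma~\ref{lema:geradora} exhibits $g(s_0,s_1)=-d_S(\Gamma(s_0),\Gamma(s_1))$, which is $C^j$ off the diagonal $s_0=s_1$, as a generating function, so $T_\Gamma$ is exact symplectic; finally Lemma~\ref{lema:segunda} gives $\partial^2 g/\partial s_0\partial s_1\neq0$ and Lemma~\ref{lema:DT} gives $\partial s_1/\partial\psi_0=g/\sin\psi_1\neq0$ (and the analogous nonzero expressions in $\S$ and $\H$), which is the monotone twist condition.

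The one piece that still has to be checked by hand is the behaviour near the two boundary circles $\{\psi=0\}$ and $\{\psi=\pi\}$ of $\mathcal C$. Fix a continuous lift $\widetilde s_1(s_0,\psi_0)\in\R$ of the first component of $T_\Gamma$. Strict geodesic convexity of $\Gamma$ (Lemma~\ref{convexcurve}) says that the geodesic of $S$ tangent to $\Gamma$ at $\Gamma(s_0)$ meets $\Gamma$ only at $\Gamma(s_0)$; as $\psi_0\to0^+$ (resp. $\psi_0\to\pi^-$) the chord $\Gamma(s_0)\,\Gamma(s_1)$ tends to this tangent geodesic from one side (resp. the other), so the displacement $\widetilde s_1(s_0,\psi_0)-s_0$ has one-sided limits $0$ and $l$ at the two ends of $(0,\pi)$ (in the order dictated by the sign of $\partial s_1/\partial\psi_0$), uniformly in $s_0$ by compactness of $\Gamma$. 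Together with the twist property this shows that the rotation number of $T_\Gamma$ is a continuous strictly monotone function of $\psi_0$ with range the full open interval $(0,1)$, so every rational $m/n$ with $0<m<n$ lies strictly inside the twist interval of $T_\Gamma$.

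Once these facts are in place the classical theorem applies: for coprime $m,n$ with $n>2$ and $0<m<n$ one considers the action $\mathcal L(s_1,\dots,s_n)=-\sum_{i=1}^{n}g(s_i,s_{i+1})$, the geodesic perimeter of the inscribed polygon, on the compact space of cyclically weakly monotone $n$-tuples on $\R/l\mathbb Z$ that wind $m$ times around $\Gamma$; by Lemma~\ref{lema:geradora} its critical points lying in the (strictly monotone) interior are exactly the Birkhoff $(m,n)$-orbits of $T_\Gamma$. The maximum of $\mathcal L$ is attained in the interior --- if $s_i=s_{i+1}$ one can strictly increase $\mathcal L$ by pulling that vertex off $\Gamma(s_i)$, since a geodesic segment meets the strictly convex curve $\Gamma$ in at most its two endpoints --- which produces one Birkhoff $(m,n)$-orbit, and a second, distinct one is produced by a minimax over arcs in that space joining a maximizing configuration to one of its cyclic relabellings (distinct relabellings because $\gcd(m,n)=1$), whose minimax value is either $<\max\mathcal L$, giving a genuinely different orbit, or $=\max\mathcal L$, in which case the level set already contains a critical continuum; the condition $n>2$ only ensures that the geodesic polygons in question are non-degenerate. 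I expect the \emph{main obstacle} to be, not the abstract theorem, but the boundary analysis of the second paragraph: establishing uniformly in $s_0$, and simultaneously in the three geometries $\E,\S,\H$, that the next-impact point collapses onto $\Gamma(s_0)$ from the correct side as $\psi_0\to0^+$ and $\psi_0\to\pi^-$ --- this is exactly the place where strict geodesic convexity enters essentially.
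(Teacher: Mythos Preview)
Your proposal is correct and follows the same route as the paper: the paper does not give its own proof of this theorem at all, but simply invokes the classical Birkhoff result for monotone twist maps from \cite{kat}, page~356, after having established in Theorem~1 (via the lemmas you cite) that $T_\Gamma$ is a conservative Twist diffeomorphism. Your sketch supplies more detail than the paper does---in particular the boundary analysis showing the rotation interval is $(0,1)$ and the outline of the max/minimax variational argument---but the underlying approach is identical.
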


{\em At least two} does not necessarily mean {\em in a finite number}.  As was proved by Bolotin in \cite{bol}, the geodesic circular billiard on $S$ is integrable and then has infinitely many Birkhoff orbits of any period. On the other side, generic $C^1$-diffeomorphisms defined on compact sets have a finite number of nondegenerate periodic orbits of each period. This will be the case also for oval billiards on $S$, although encountering here two main differences: the domain of the billiard map is an open cylinder and perturbations of billiards as diffeomorphisms may not be billiards.

 To circumvent these problems we will perturb the boundary curve $\Gamma$, instead of the map itself, and will find compact sets on the open cylinder where the periodic orbits lay,  analogously to the way it was done by Dias Carneiro, Oliffson Kamphorst and Pinto-de-Carvalho  \cite{dia} for planar billiards.  For technical reasons that will be clear below, we will only consider $C^\infty$ boundary ovals. Using those facts we will be able to prove that 

\begin{theorem}
 For each fixed period $n\geq 2$, having only a finite number of $n$-periodic orbits, all nondegenerate, is an open and dense property for $C^\infty$ oval billiards on $\H$. For $C^\infty$ oval billiards on $\S$ it is only an open property.
\end{theorem}

\subsection{Normal perturbations of ovals}

Let $\Gamma:I\mapsto S$ be a $C^\infty$ oval parameterized by the arclength parameter $s$ and $\eta(s)$ be the unitary normal vector such that $\{\Gamma'(s),\eta(s)\}$ is an oriented positively and orthonormal basis of the tangent plane of $S$ at $\Gamma(s)$.

\begin{definition}
$\beta$ is $\epsilon$-$C^2$-close to $\Gamma$ if $\beta$ can be written as
 \begin{equation}\label{pertnormal}
{\beta(s)\, =\,\left\{\begin{array}{lcl}
 \displaystyle{\frac{\Gamma(s)\,+\,\lambda(s)\eta(s)}{\sqrt{1+\lambda^2(s)}}}&\mbox{in}&\S\\[1ex]
\displaystyle{\frac{\Gamma(s)\,+\,\lambda(s)\eta(s)}{\sqrt{1-\lambda^2(s)}}}&\mbox{in}&\H
\end{array}\right.}
\end{equation}
where $\lambda:I\to\R$ is $C^j, j\geq2$ with $||\lambda||_2 <\epsilon$.
\end{definition}

Remark that if $\beta$ is $\epsilon$-$C^2$-close to $\Gamma$ then the trace of $\beta$ is contained on the  tubular neighbourhood $V_{\epsilon}(\Gamma)$, given by the radial projection of the set $\{\Gamma(s)+\lambda\eta(s),\, s\in I,\, -\epsilon<\lambda<\epsilon \} $ onto $S$, which, as $\Gamma$ is an oval, is an open subset of $S$ for $\epsilon$ sufficiently small.

\begin{lemma}
If $\epsilon$ is sufficiently small, $\lambda$ is $C^\infty$ and $\beta$ is $\epsilon$-$C^2$-close to $\Gamma$, then $\beta$ is a $C^\infty$ oval.
\end{lemma}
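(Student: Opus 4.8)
The plan is to check, in turn, each property in the definition of an oval for $\beta$, and to reduce the only non-elementary one, strict geodesic convexity, to the positivity of the geodesic curvature via Lemma \ref{convexcurve}. First, smoothness: by Lemma \ref{lema:forma} the curve $\beta$ is obtained from the $C^\infty$ data $\Gamma$, $\eta$, $\lambda$ by the formulas (\ref{pertnormal}); in $\S$ and $\H$ the denominators $\sqrt{1\pm\lambda^2(s)}$ are $C^\infty$ and bounded away from zero as soon as $\epsilon<1$, so $\beta\in C^\infty$. It is closed because $\lambda$ is $l$-periodic, it inherits an orientation from its parametrization, and it is simple because, by the definition of $\epsilon$-closeness, its canonical projection onto the image of $\Gamma$ is a diffeomorphism and hence $\beta$ is injective.

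Next I would establish regularity. Differentiating (\ref{pertnormal}) and using the Frenet-type relations for the geodesic frame $\{\Gamma',\eta\}$ in each of the three geometries, one finds that $\beta'(s)$ equals $\Gamma'(s)$ plus a term whose norm (in the relevant metric) is bounded by a constant times $\|\lambda\|_2$, the constant depending only on $\sup_s|k_\Gamma(s)|$. Since $\Gamma'$ has unit length, there is $\epsilon_0>0$, depending only on $\Gamma$, such that $\|\lambda\|_2<\epsilon_0$ forces $\|\beta'(s)\|$ to be bounded below by a positive constant for all $s$; thus $\beta$ is regular, and $\beta'$ stays in the same half-space as $\Gamma'$, which also confirms the coherence of the orientation.

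It remains to prove strict geodesic convexity. Having shown that $\beta$ is a closed, simple, regular $C^\infty$ curve, by Lemma \ref{convexcurve} it suffices to verify that $k_\beta(s)>0$ for every $s$. Writing out the geodesic curvature of $\beta$ in each model in terms of $\beta$, $\beta'$, $\beta''$, hence in terms of $s$ and $(\lambda(s),\lambda'(s),\lambda''(s))$, one sees that $k_\beta(s)$ is given by a fixed expression, rational in its arguments, with denominator a positive power of $\|\beta'(s)\|$ (up to a metric factor), which by the previous step is bounded below; moreover, when $\lambda\equiv 0$ this expression reduces to $k_\Gamma(s)$. Since $\Gamma$ is an oval and $I$ is compact, $k_{\min}:=\min_s k_\Gamma(s)>0$. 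Therefore the assignment $\lambda\mapsto k_\beta$ is continuous from $C^2(I)$ into $C^0(I)$ near $\lambda\equiv 0$, so there exists $\epsilon\in(0,\epsilon_0)$, depending only on $\Gamma$, such that $\|\lambda\|_2<\epsilon$ implies $\|k_\beta-k_\Gamma\|_0<k_{\min}/2$, whence $k_\beta>k_{\min}/2>0$ everywhere. Collecting all the properties, $\beta$ is a $C^\infty$ oval, and the $C^2$ control of $\lambda$ is exactly what is needed since $k_\beta$ involves $\lambda''$.

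The main obstacle is the explicit control of $k_\beta$ in the spherical and hyperbolic cases: one must produce the geodesic-curvature formula for a curve written in the form (\ref{pertnormal}), and check both that it depends continuously on $\lambda$ in the $C^2$ topology and that it specializes to $k_\Gamma$ when $\lambda\equiv 0$. The Euclidean computation is classical; for $\S$ and $\H$ the computation uses the identity $\eta=-\Gamma$ along the relevant geodesic and the description of geodesics as plane sections through the origin, in the same spirit as the proof of Lemma \ref{lema:segunda}.
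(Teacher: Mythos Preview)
Your argument is correct and follows essentially the same route as the paper: smoothness from the explicit formula, closedness and simplicity from the diffeomorphic projection onto $\Gamma$, regularity from $\beta'=\Gamma'+\text{(small)}$, and strict geodesic convexity from $k_\beta>0$ via Lemma~\ref{convexcurve}, where positivity of $k_\beta$ comes from the $C^2$-continuity of the map $\lambda\mapsto k_\beta$ and $k_\Gamma\ge k_{\min}>0$. The paper compresses the last two steps into the single remark that $\beta'=\Gamma'+R_1$ and $\beta''=\Gamma''+R_2$ with $\|R_i\|\to 0$ as $\|\lambda\|_2\to 0$, but the content is the same.

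One small inaccuracy in your closing paragraph: the identity ``$\eta=-\Gamma$'' from the proof of Lemma~\ref{lema:segunda} refers to the principal normal of a \emph{geodesic} in $\S$ or $\H$ viewed as a curve in $\R^3$, not to the normal $\eta$ of the oval $\Gamma$ used in (\ref{pertnormal}); these are different objects, so that remark should be dropped or rephrased. This does not affect the validity of your proof, since the substantive claim---continuous dependence of $k_\beta$ on $\lambda$ in $C^2$---follows directly from the fact that $k_\beta$ is built from $\beta,\beta',\beta''$ by smooth operations.
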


\begin{proof} As $\lambda$ is a $C^\infty$ function, $\beta$ is a $C^\infty$ curve. As $\Gamma$ is closed, $\beta$ is a closed curve.

Moreover, $\beta'(s)=\Gamma'(s)+R_1(s,\lambda(s),\lambda'(s))$ and $\beta''(s)=\Gamma''(s)+R_2(s,\lambda(s),\lambda'(s), \lambda''(s))$ with $||R_i||\to 0$ if $||\lambda||_2\to 0$, uniformly on $s$, which implies that, if $\epsilon$ is sufficiently small, $\beta$ is regular and has strictly positive geodesic curvature.
\end{proof}

\begin{lemma}\label{lema:gera}
If $\epsilon$ is sufficiently small and $\beta$ is $\epsilon$-$C^2$-close to $\Gamma$, then their associated generating functions $g_\Gamma$ and $g_\beta$ are close in the $C^2$ topology.
\end{lemma}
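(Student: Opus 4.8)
The plan is to express $g_\beta$ explicitly in terms of $g_\Gamma$ and the perturbation function $\lambda$, and then read off $C^2$-closeness from the fact that $\lambda$ is small in $C^2$. Fix $\epsilon$ small enough that Lemma \ref{lema:forma} applies, so that $\beta(s)=\beta(s;\lambda)$ has the stated form in each of $\R^2$, $\S$, $\H$, with the dependence on $(s,\lambda(s))$ being real-analytic (the formulas involve only $\Gamma$, $\eta$, and the elementary functions $\sqrt{1\pm\lambda^2}$). By the distance formulas recalled in Section 2,
\[
g_\beta(s_0,s_1)=-d_S(\beta(s_0),\beta(s_1))
\]
is, in each geometry, an explicit expression: $-\sqrt{\langle \beta(s_1)-\beta(s_0),\beta(s_1)-\beta(s_0)\rangle}$ in $\E$, $-\arccos\langle \beta(s_0),\beta(s_1)\rangle$ in $\S$, $-\mathrm{arccosh}(-\langle\langle \beta(s_0),\beta(s_1)\rangle\rangle)$ in $\H$. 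Substituting the formula for $\beta$ from Lemma \ref{lema:forma}, one obtains in each case a function
\[
g_\beta(s_0,s_1)=F\bigl(s_0,s_1,\lambda(s_0),\lambda(s_1)\bigr)
\]
where $F$ is a fixed smooth function on a neighbourhood of $I\times I\times\{0\}\times\{0\}$ (smoothness holds on the relevant domain because the argument of $\arccos$ stays in $(-1,1)$ and the argument of $\mathrm{arccosh}$ stays in $(1,\infty)$, these being open conditions preserved under small perturbation; this uses that $\Gamma$, hence $\beta$, is an embedded curve with positive geodesic curvature, so distinct parameters give distinct points), and $F(s_0,s_1,0,0)=g_\Gamma(s_0,s_1)$.

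Next I would estimate $g_\beta-g_\Gamma$ and its first and second partials in $(s_0,s_1)$. Write $\Lambda(s_0,s_1)=(\lambda(s_0),\lambda(s_1))$, a $C^2$ map from $I\times I$ to a neighbourhood of the origin in $\R^2$, with $\|\Lambda\|_2$ controlled by $\|\lambda\|_2<\epsilon$. Then $g_\beta-g_\Gamma = F(s_0,s_1,\Lambda)-F(s_0,s_1,0)$, and by Taylor's theorem with the chain rule, $\|g_\beta-g_\Gamma\|_{C^2(K)}$ on any compact $K\subset I\times I$ is bounded by $C_K\,\Phi(\|\lambda\|_2)$ where $\Phi(t)\to 0$ as $t\to 0$ and $C_K$ depends only on $\sup$-bounds of $F$ and its derivatives up to order $3$ on a fixed compact neighbourhood of $K\times\{0\}$, together with $\|\Gamma\|_{C^3}$. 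Since $g_\Gamma$ and $g_\beta$ are $l$-periodic in each argument, it suffices to work on one fundamental domain; but one must be mindful that $g$ is defined on the diagonal only as a limit and is genuinely smooth off the diagonal, so "$C^2$ topology" here should be understood as $C^2$ on the complement of a neighbourhood of the diagonal (or on the natural domain of the billiard generating function), exactly as $g_\Gamma$ itself is. With that reading, the estimate above gives: for every $\delta>0$ there is $\epsilon>0$ such that $\|\lambda\|_2<\epsilon$ implies $\|g_\beta-g_\Gamma\|_{C^2}<\delta$, which is the claim.

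The main obstacle is bookkeeping rather than conceptual: one must verify that the function $F$ really is smooth and has derivatives bounded uniformly as $\lambda$ ranges over a small $C^2$-ball — in particular that no denominator ($\sqrt{1\pm\lambda^2}$, $\sin g_\beta$, $\sinh g_\beta$, or the square root in the Euclidean distance) degenerates. For $\sqrt{1\pm\lambda^2}$ this is immediate from $|\lambda|<\epsilon<1$; for the distance-type denominators it follows because $\beta$ stays in the tubular neighbourhood $V_\epsilon(\Gamma)$ of an oval, so $d_S(\beta(s_0),\beta(s_1))$ stays bounded away from $0$ and from the diameter of the ambient region on the off-diagonal compact sets under consideration, uniformly in $\lambda$ for $\epsilon$ small. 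Once these non-degeneracy facts are in hand, the closeness is a routine application of the chain rule and uniform continuity of the finitely many derivatives of $F$ on a compact set, so I would state those two points carefully and leave the explicit differentiation to the reader.
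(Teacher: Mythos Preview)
Your approach is essentially the paper's: write $\beta$ via Lemma~\ref{lema:forma}, observe that $-d_S(\beta(s_0),\beta(s_1))$ is a smooth function of $(s_0,s_1,\lambda(s_0),\lambda(s_1))$ reducing to $g_\Gamma$ at $\lambda\equiv 0$, and invoke the chain rule. The care you take with the diagonal and the non-degeneracy of the distance denominators is sensible and more explicit than the paper, which simply lists the remainders $R_0,R_1,R_2$ for $\beta,\beta',\beta''$.

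There is, however, one ingredient you omit. The generating function $g_\beta$ is, by definition, $-d_S$ evaluated at the \emph{arclength} parameter of $\beta$, not at the parameter $s$ inherited from $\Gamma$. Your function $F(s_0,s_1,\lambda(s_0),\lambda(s_1))=-d_S(\beta(s_0),\beta(s_1))$ is therefore $g_\beta\circ(\sigma\times\sigma)$, the pull-back by the arclength map $\sigma$ of $\beta$, rather than $g_\beta$ itself. To finish, you need that $\sigma$ is $C^2$-close to the identity; this is exactly the paper's additional remark $\sigma=s+R_3(s,\lambda,\lambda')$ with $\|R_3\|\to 0$ as $\|\lambda\|_2\to 0$ (which follows from $\sigma'(s)=|\beta'(s)|=1+O(\|\lambda\|_1)$). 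Without this step your argument proves closeness of the wrong object, and the application to Proposition~1 (where the billiard maps are compared in their respective arclength coordinates) would not go through. The fix is easy---just add the reparameterization estimate---but it should be stated.
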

\begin{proof}
Let $\Gamma$ be a $C^\infty$ oval on $S$, parameterized by the arclength parameter $s$ and $\beta$ be a normal perturbation of $\Gamma$ as in (\ref{pertnormal}), parameterized by the arclength parameter $\sigma$. 
The generating functions are $g_\Gamma(s_0,s_1)=-d_S(\Gamma(s_0),\Gamma(s_1))$ and $g_\beta(s_0,s_1)=-d_S(\beta(\sigma(s_0)),\beta(\sigma(s_1)))$. 

The result will follow immediately from the following remarks:\\
 $\beta(s)=\Gamma(s)+R_0(s,\lambda(s))$, $\beta'(s)=\Gamma'(s)+R_1(s,\lambda(s),\lambda'(s))$,  $\beta''(s)=\Gamma''(s)+R_2(s,\lambda(s),\lambda'(s), \lambda''(s))$ and $\sigma=s+R_3(s,\lambda,\lambda')$,  with $\|R_i\|\to 0$ if $\|\lambda\|_2\to 0$, uniformly on $s$.
\end{proof}

\begin{proposition}
If $\epsilon$ is sufficiently small and $\beta$ is $\epsilon$-$C^2$-close to $\Gamma$ then their associated billiard maps $T_\Gamma$ and $T_\beta$ are close in the $C^1$ topology.
\end{proposition}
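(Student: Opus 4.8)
The plan is to pass through the generating functions and use the chain rule. Recall that the billiard map is built from $g$ by the implicit-function construction in Lemma \ref{lemma:difeo}: $s_1 = s_1(s_0,\psi_0)$ is defined implicitly by $\frac{\partial g}{\partial s_0}(s_0,s_1)=\cos\psi_0$, and then $\psi_1$ is recovered from $\frac{\partial g}{\partial s_1}$. The explicit entries of $DT_\Gamma$ in Lemma \ref{lema:DT} show that $DT_\Gamma(s_0,\psi_0)$ is a fixed smooth expression in the quantities $k_0,k_1,\psi_0,\psi_1,g(s_0,s_1)$ (and their hyperbolic/trigonometric functions), with denominators $\sin\psi_1$ that stay bounded away from $0$ on any compact set of the open cylinder. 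So the strategy is: (i) upgrade Lemma \ref{lema:gera} to the statement that on a fixed compact subcylinder all the ingredients $g_\beta, \partial g_\beta, \partial^2 g_\beta$ converge to those of $\Gamma$; (ii) deduce that $T_\beta \to T_\Gamma$ and $DT_\beta \to DT_\Gamma$ uniformly on that compact set; (iii) handle the non-compactness of the cylinder $\mathcal{C} = \mathbb{R}/l\mathbb{Z}\times(0,\pi)$ near $\psi = 0,\pi$ by a separate, soft argument.

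First I would fix $\delta>0$ and work on $\mathcal{C}_\delta = \mathbb{R}/l\mathbb{Z}\times[\delta,\pi-\delta]$. By Lemma \ref{lema:gera}, if $\beta$ is $\epsilon$-$C^2$-close to $\Gamma$ then $\|g_\beta - g_\Gamma\|_{C^2}$ is small; in particular $\frac{\partial^2 g_\beta}{\partial s_0\partial s_1}$ is uniformly close to $\frac{\partial^2 g_\Gamma}{\partial s_0\partial s_1}$, which by Lemma \ref{lema:segunda} is bounded below by a positive constant on $\mathcal{C}_\delta$ (since $\sin\psi_i \geq \sin\delta$ and $g$ is bounded). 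Hence the implicit function defining $s_1$ depends continuously — in $C^1$ — on $g$ in the $C^2$ topology: differentiating $G(s_0,s_1,\psi_0)=\frac{\partial g}{\partial s_0}(s_0,s_1)-\cos\psi_0 = 0$ expresses $\frac{\partial s_1}{\partial s_0}$ and $\frac{\partial s_1}{\partial\psi_0}$ as ratios of second derivatives of $g$ with nonvanishing denominator, exactly as in Lemma \ref{lema:formulas}. Therefore $s_1^\beta \to s_1^\Gamma$ in $C^1$, then $\psi_1^\beta \to \psi_1^\Gamma$ in $C^1$ (using $\psi_1 = \arccos(-\partial g/\partial s_1)$ composed with $s_1$), and finally, plugging into the formulas of Lemma \ref{lema:DT}, $DT_\beta \to DT_\Gamma$ uniformly on $\mathcal{C}_\delta$. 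This gives $C^1$-closeness on each compact $\mathcal{C}_\delta$.

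The remaining point — and the one I expect to be the only real subtlety — is that $C^1$-closeness on every $\mathcal{C}_\delta$ is not literally $C^1$-closeness on the open cylinder $\mathcal{C}$, because the derivatives of $T$ blow up as $\psi \to 0$ or $\psi\to\pi$ (the $1/\sin\psi_1$ terms). To deal with this one observes that the blow-up has a \emph{universal} rate: from Lemma \ref{lema:DT}, $\sin\psi_1 \cdot DT_\Gamma$ extends continuously (indeed smoothly) to the closed cylinder, and likewise $\frac{\partial s_1}{\partial\psi_0}\sin\psi_1 = g$ (or $\sin g$, $\sinh g$) is smooth up to the boundary. So the right statement — which is what "$C^1$-close" should mean here and is what is actually used later — is closeness in the space of maps for which $\sin\psi_1\, DT$ extends continuously to $\overline{\mathcal{C}}$, or equivalently uniform closeness on compacta \emph{together with} the matching boundary behaviour. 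Since $g_\beta \to g_\Gamma$ in $C^2$ \emph{uniformly on all of} $\mathbb{R}/l\mathbb{Z}\times\mathbb{R}/l\mathbb{Z}$ (the estimate in Lemma \ref{lema:gera} is uniform in $s$), the boundary extensions $\sin\psi_1\, DT_\beta$ also converge uniformly on $\overline{\mathcal{C}}$, and closeness on the open cylinder in this sense follows. I would state the proposition with this understanding (mirroring the convention in \cite{dia}) and carry out the estimate on a fixed $\mathcal{C}_\delta$ in detail, remarking that the boundary strips are controlled by the uniform $C^2$-closeness of the generating functions together with the explicit $\sin\psi_1$ normalisation in Lemma \ref{lema:DT}.
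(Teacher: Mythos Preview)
Your proposal is correct and follows essentially the same route as the paper: both arguments reduce $C^1$-closeness of $T_\beta$ and $T_\Gamma$ to $C^2$-closeness of the generating functions (Lemma~\ref{lema:gera}) via the implicit-function construction of Lemma~\ref{lemma:difeo}. The paper's proof is in fact a two-line sketch that states exactly this reduction and stops; you have unpacked it further by working on compact subcylinders $\mathcal{C}_\delta$ and explicitly discussing the $1/\sin\psi_1$ blow-up near $\psi=0,\pi$, a point the paper does not address at all.
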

\begin{proof}
By the construction of the billiard map from the generating function, as was done for instance in the proof of lemma \ref{lemma:difeo}, billiard maps will be close in the $C^1$ topology if their generating functions are close in the $C^2$ topology. So the result follows directly from the lemma \ref{lema:gera}.
\end{proof}

\subsection{Finite number of nondegenerate $n$-periodic orbits}

\subsubsection{Openness}

\begin{lemma}\label{lema:compacto}
Let $\Gamma:I\to S$ be an oval. There exists a positive real number  $\delta_n$  such that any $n$-periodic orbit of $T_\Gamma$ has at least one point on the compact strip $I\times [\delta_n,\pi-\delta_n]$.
\end{lemma}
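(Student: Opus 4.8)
The plan is to produce $\delta_n>0$ for which no $n$-periodic orbit can have all of its angles $\psi_i$ in $(0,\delta_n)\cup(\pi-\delta_n,\pi)$; this is exactly the assertion that every $n$-periodic orbit meets the strip $I\times[\delta_n,\pi-\delta_n]$. The whole argument rests on controlling $T_\Gamma$ near the tangent edges $\psi=0$ and $\psi=\pi$ of the cylinder.

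First I would analyse that edge. Since $\Gamma$ is compact and strictly geodesically convex, a trajectory leaving $\Gamma(s_0)$ at an angle $\psi_0$ close to $0$ (or to $\pi$) is almost tangent to $\Gamma$, so by strict convexity its chord $d_S(\Gamma(s_0),\Gamma(s_1))$ is small, uniformly in $s_0$; hence $s_1\to s_0$ and, by Lemma \ref{lema:geradora}, $\psi_1\to 0$ (resp. $\pi$) as $\psi_0\to 0^{+}$ (resp. $\pi^{-}$). Thus $T_\Gamma$ extends continuously to the closed cylinder $\overline{\cal C}=\R/l\mathbb{Z}\times[0,\pi]$, fixing each boundary circle pointwise. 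From this I extract two facts. The first is a propagation statement: there is $\delta'\in(0,\pi/2)$ with $T_\Gamma(\{\psi<\delta'\})\subset\{\psi<\delta'\}$ and $T_\Gamma(\{\psi>\pi-\delta'\})\subset\{\psi>\pi-\delta'\}$, i.e. a near-tangent reflection is followed by a near-tangent reflection on the same side. The second is a jump estimate: lifting $s$ to $\R$ and always choosing the next impact in $(\tilde s_i,\tilde s_i+l)$, a step exiting with $\psi_i$ near $0$ moves backwards by a tiny arc, so its increment $\tilde s_{i+1}-\tilde s_i$ is near $l$, whereas a step exiting with $\psi_i$ near $\pi$ moves forwards by a tiny arc, so its increment is near $0$; precisely, for each $n$ there is $\delta''\in(0,\delta']$ such that $\psi_i<\delta''$ forces $\tilde s_{i+1}-\tilde s_i>l-\frac{l}{2n}$, and $\psi_i>\pi-\delta''$ forces $\tilde s_{i+1}-\tilde s_i<\frac{l}{2n}$. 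Both follow from uniform continuity of the extended map together with the fact that it fixes $\psi=0$ and $\psi=\pi$; alternatively they can be read off quantitatively from the $\sin\psi$ factors in Lemmas \ref{lema:segunda} and \ref{lema:DT}, which are uniform on $\Gamma$ because the geodesic curvature is bounded below by a positive constant.

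Then I would close the argument by a rotation-number count. For the chosen lift $0<\tilde s_{i+1}-\tilde s_i<l$ always, since consecutive impacts are distinct; hence for any $n$-periodic orbit $\tilde s_n-\tilde s_0=ml$ with $m\in\{1,\dots,n-1\}$ (the rotation number being $m/n$). Set $\delta_n=\delta''$ and suppose an $n$-periodic orbit has all $\psi_i\in(0,\delta_n)\cup(\pi-\delta_n,\pi)$. Since $\delta_n\le\delta'<\pi/2$, one has $\delta'<\pi-\delta_n$, so if some $\psi_{i_0}<\delta_n$ the propagation statement and the orbit hypothesis force $\psi_i<\delta_n$ for all $i$; symmetrically, if some $\psi_{i_0}>\pi-\delta_n$ then all $\psi_i>\pi-\delta_n$. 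In the first case the jump estimate gives $ml=\tilde s_n-\tilde s_0=\sum_{i=0}^{n-1}(\tilde s_{i+1}-\tilde s_i)>n\left(l-\frac{l}{2n}\right)=nl-\frac{l}{2}>(n-1)l$, contradicting $m\le n-1$; in the second case $\tilde s_n-\tilde s_0<n\cdot\frac{l}{2n}=\frac{l}{2}<l\le ml$, again a contradiction. Hence some $\psi_i$ lies in $[\delta_n,\pi-\delta_n]$, i.e. $(s_i,\psi_i)\in I\times[\delta_n,\pi-\delta_n]$.

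The main obstacle is the uniform geometric control at the tangent edge: that near-tangent trajectories have uniformly short chords, are followed by near-tangent trajectories, and have arclength increment converging to $l$ (resp. to $0$). This is precisely where strict positivity of the geodesic curvature and compactness of $\Gamma$ enter, and it is the only step in which the three model surfaces need separate care; the dichotomy and the rotation-number count afterwards are soft and identical in all three geometries.
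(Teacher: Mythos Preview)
Your approach is genuinely different from the paper's and, modulo one repairable overstatement, it works. The paper argues via Gauss--Bonnet: for a simple geodesic $n$-gon inscribed in $\Gamma$ one bounds $\sum\zeta_i$ (hence $\sum\overline\psi_i$) and extracts one $\overline\psi_{i_0}$ in $[\pi/n,(n-1)\pi/n]$ in $\E$ and $\H$, with a separate area argument on $\S$ (where Gauss--Bonnet points the other way) and a reduction for non-simple polygons. This yields an \emph{explicit} $\delta_n$. Your argument is purely dynamical: you exploit the continuous extension of $T_\Gamma$ to the boundary circles of the cylinder, a no-jump statement between the two edges, a uniform arclength-increment estimate near each edge, and a rotation-number count. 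The trade-off is that you do not get an explicit $\delta_n$, but your scheme transfers verbatim to any monotone twist map that extends continuously to the boundary circles as the identity, without ever invoking the geometry of $S$ beyond the near-tangent chord estimate.

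The one real gap is your ``propagation statement'': the existence of $\delta'$ with $T_\Gamma(\{\psi<\delta'\})\subset\{\psi<\delta'\}$ is an \emph{invariance} of the strip, and uniform continuity of the boundary extension does not give that (uniform continuity only yields, for each $c>0$, some $\delta$ with $\psi_0<\delta\Rightarrow\psi_1<c$, and there is no reason one can take $c=\delta$). For a non-circular oval the level sets $\psi=\mathrm{const}$ are not invariant curves of $T_\Gamma$, so in general $\max_{s_0}\psi_1(s_0,\delta')>\delta'$ for every small $\delta'$. Fortunately your argument does not need invariance. What you actually use is the weaker no-jump fact: there exists $\delta'\in(0,\pi/2)$ such that $\psi_0<\delta'\Rightarrow\psi_1<\pi/2$ (and symmetrically at $\psi=\pi$), which \emph{does} follow from uniform continuity with $c=\pi/2$. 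Then, choosing $\delta_n=\delta''\le\delta'$, if all $\psi_i\in(0,\delta_n)\cup(\pi-\delta_n,\pi)$ and some $\psi_{i_0}<\delta_n$, the no-jump gives $\psi_{i_0+1}<\pi/2<\pi-\delta_n$, hence (by the orbit hypothesis) $\psi_{i_0+1}<\delta_n$; iterate. With this correction the dichotomy and the rotation-number contradiction go through exactly as you wrote them. (A minor orientation slip: with the paper's convention, $\psi$ near $0$ means the outgoing direction is nearly $\Gamma'(s)$, so the step is \emph{forward} with increment near $0$, and $\psi$ near $\pi$ gives increment near $l$; your two cases are swapped, but since you treat them symmetrically this does not affect the conclusion.)
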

\begin{proof}
Let $\{(\overline s_0,\overline\psi_0),...,(\overline s_{n-1},\overline\psi_{n-1})\}$ be an $n$-periodic orbit of $T_\Gamma$. Then the points $\Gamma(\overline s_i)$ are the vertices of a geodesic polygon $P$ inscribed on $\Gamma$. Let us suppose that this geodesic polygon is simple and let $\zeta_i$ be the internal angles.

In $\H$, by Gauss-Bonnet Theorem, $\sum \zeta_i\leq (n-2)\pi$. Then $\sum\overline\psi_i\geq \pi$ and there exists $i_0$ such that $\overline \psi_{i_0}\geq\frac{\pi}{n}$. By the reversibility of $T_\Gamma$, $\pi-\overline\psi_{i_0}\geq\frac{\pi}{n}$ and then $\frac{\pi}{n}\leq\overline\psi_{i_0}\leq\pi-\frac{\pi}{n}$. 

In $\S$, Gauss-Bonnet Theorem gives $\sum \zeta_i> (n-2)\pi$.  So we have to work in a slightly different way. Since $\Gamma(I)\subset \S$ there exists  $m_0>n$ such that the area $A_\Gamma$ enclosed by $\Gamma$ satisfies $A_\Gamma<2\pi-\frac{\pi n}{m_0}$.
Suppose now that there is an $n$-periodic orbit $\{(\overline s_0,\overline\psi_0),...,(\overline s_{n-1},\overline\psi_{n-1})\}$, associated to a simple geodesic polygon $P$ and such that $\overline\psi_i<\frac{\pi}{m_0}$ for all $i$. The area $A_P$ enclosed by $P$ satisfies $A_P<A_\Gamma<2\pi-\frac{\pi n}{m_0}$. But, again by Gauss-Bonnet Theorem, 
$A_P\geq2\pi-\frac{1}{2}\sum(\pi-\zeta_i)=2\pi-\sum\overline\psi_i>2\pi-\frac{\pi n}{m_0}>A_\Gamma$
and then there is at least one $i_0$ such that $\frac{\pi}{m_0}<\overline\psi_i$. Once more, by the reversibility of $T_\Gamma$,  $\frac{\pi}{m_0}\leq\overline\psi_{i_0}\leq\pi-\frac{\pi}{m_0}$. 

If the geodesic polygon $P$ is not simple, we can take a new simple polygon $\tilde P$ with the same vertices on $\Gamma$ as $P$ and with internal angles $\tilde \zeta_i$. Taking only the internal angles $\zeta_i$ of $P$ at the vertices on $\Gamma$ we have that $\sum\zeta_i<\sum\tilde\zeta_i$ and the result follows. \end{proof}

\begin{proposition} \label{prop:open}
For a fixed period $n\geq 2$, the set of $C^\infty$-ovals on $S$ such that its associated billiard map has only a finite number of $n$-periodic orbits, all nondegenerate, is an open set.
\end{proposition}
\begin{proof} Suppose that the $C^\infty$ diffeomorphism $T_\Gamma$ has only nondegenerate $n$-periodic orbits. To each one of these orbits corresponds a fixed point of $T_\Gamma^n$ on the compact strip $I\times [\delta_n,\pi-\delta_n]$. So they must be in a finite number and then $T_\Gamma$ has only a finite number of nondegenerate $n$-periodic orbits. 
Taking $\epsilon$ sufficiently small, any perturbation $\beta$ $\epsilon$-$C^2$-close to $\Gamma$ corresponds to a billiard map $T_\beta$ $C^1$-close to $T_\Gamma$ and will have also only a finite number of nondegenerate $n$-periodic orbits. \end{proof}

\subsubsection{Density}

Suppose $\{(\overline s_0,\overline\psi_0), (\overline s_1,\overline\psi_1),..., (\overline s_{n-1},\overline\psi_{n-1})\}$ a degenerate $n$-periodic orbit for $T_\Gamma$.
As det($DT_\Gamma^n|_{(\overline s_0,\overline\psi_0)}) =1$,  being degenerate translates as $\mbox{tr}(DT_\gamma^n|_{(\overline s_0,\overline\psi_0)} )=\pm 2.$

By  lemma \ref{lema:DT}
\begin{eqnarray*}
DT^n_{(\overline s_0,\overline\psi_0)}& =&DT_{(\overline s_{n-1},\overline\psi_{n-1})}DT_{(\overline s_{n-2},\overline\psi_{n-2})}...DT_{(\overline s_0,\overline\psi_0)}\\
&=&\frac{1}{\sin\overline\psi_0...\sin\overline\psi_{n-1}}A_{n-1}....A_1A_0
\end{eqnarray*}
Each matrix $A_i=k_{i+1}k_iB_i+k_{i+1}C_i+k_iD_i+E_i$, where $k_i$  is the geodesic curvature of  $\Gamma$ at $s_i$, and the entries of the matrices $B_i, C_i, D_i, E_i$ depend only on the angles $\overline\psi_i$ and $\overline\psi_{i+1}$ and on the geodesic distance between $\Gamma(s_i)$ and $\Gamma(s_{i+i})$. 
 
Let us fix our attention on one impact point of the degenerate $n$-periodic trajectory, say $\Gamma(s_1)$. If it impacts $m_1$ times at  $\Gamma(s_1)$  then 
$$\mbox{tr}(DT^n_{(\overline s_0,\overline\psi_0)})=p_1(k_1)=b_{m_1}k_1^{m_1}+...+b_1k_1+c_1=\pm 2$$
where the coefficients $b_j$ and $c_1$  do not depend on $k_1$.
  
If any of the $b_j\neq 0$, we can take a sufficiently small normal perturbation $\beta$, as in (\ref{pertnormal}), with a function $\lambda$ satisfying  $\lambda(s_1)=0, \lambda'(s_1)=0, \lambda''(s_1)\neq 0$, and $\lambda\equiv 0$ outside an interval containing $s_1$ and no other point of the trajectory. So we preserve the periodic orbit but change the geodesic curvature at the vertex $\Gamma(s_1)$, which implies 
$\mbox{tr}(DT^n_\beta )\neq \mbox{tr}(DT^n_\Gamma)$, i.e, on $\beta$ the orbit is nondegenerate.

If all the $b_j=0$ we take the next impact $s_2$ (as a billiard has no fixed point, $s_2\neq s_1$) and then:
$$\mbox{tr}(DT^n_{(\overline s_0,\overline\psi_0)})=p_2(k_2)=b_{m_2}k_2^{m_2}+...+b_1k_2+c_2=c_1=\pm 2$$
where the coefficients $b_j$ and $c_2$  do not depend on $k_1$ and $k_2$. 

If any of the $b_j\neq 0$ we can take the small perturbation $\beta$ at $s_2$ as above. Otherwise, we continue the process till the last impact, say $s_0$. We have then
$$\mbox{tr}(DT^n_{(\overline s_0,\overline\psi_0)})=p_0(k_0)=b_{m_0}k_0^{m_0}+...+b_1k_0+c_0=\pm 2$$
where the coefficients $b_j$ and $c_0$  do not depend any more on any of the geodesic curvatures $k_i$.

For $\H$ we have calculated $c_0$ obtaining 
 $c_0=(-1)^n2\cosh L\neq \pm 2$ 
where  $L\neq 0$ is the perimeter of the geodesic polygonal trajectory.

Then, in this case, there is a $j$ such that  $b_j\neq 0$ and we can approach $T_\Gamma$ by billiards with a nondegenerate $n$-periodic orbit.

As nondegenerate periodic orbits are isolated, with a finite number of perturbations we can construct an oval $\beta$ as close as we want to $\Gamma$ such that the associated billiard map has only a finite number of nondegenerate $n$-periodic orbits.

We have then
\begin{proposition} \label{prop:dense}
For a fixed period $n\geq2$, the set of $C^\infty$-ovals on $\H$ such that its associated billiard map has only a finite number of $n$-periodic orbits, all nondegenerate, is a dense set.
\end{proposition}

Unfortunately, the same techniques do not work at $\S$. If all the $b_j$ are zero except for the last impact $s_0$ we get 
$\mbox{tr}(DT^n_{(\overline s_0,\overline\psi_0)})=p_0(k_0)=b_{m_0}k_0^{m_0}+...+b_1k_0+c_0=\pm 2$
where, taking $L$ as the perimeter of the trajectory, 
 $c_0=(-1)^n\,2\,cos\, L$ and all the coefficients $b_j$ are multiples of  $\sin L$.
So, if  $L=\mu\pi$, $c_0=\pm 2$ and all the $b_j=0$ and our normal perturbation do not destroy the degenerescence of the orbit. 

Summarizing we have
\begin{theorem}
For each fixed period $n\geq 2$, having only a finite number of $n$-periodic orbits, all nondegenerate, is an open and dense property for $C^\infty$ oval billiards on $\H$. For $C^\infty$ oval billiards on $\S$ it is only an open property.
\end{theorem}

\subsubsection{A remark about the planar case}

Dias Carneiro, Oliffson Kamphorst and Pinto-de-Carvalho \cite{dia} proved that having only a finite number of $n$-periodic orbits, all nondegenerate, is a generic property for $C^2$ planar oval billiards.

Although their result is true, they did not analyze the trajectories with multiple impact points. Using our techniques we can fill in this gap remarking that if all the $b_j$ are zero we get in the final step
$$\mbox{tr}(DT^n_{(\overline s_0,\overline\psi_0)})=p_0(k_0)=b_{m_0}k_0^{m_0}+...+b_1k_0+(-1)^n2L=\pm 2$$
where  $L$ the perimeter of the trajectory. 
 
If $L\neq 1$, $c_0\neq \pm 2$ and then there is a $b_j\neq 0$ and the special normal perturbation on the plane destroys the degenerescence. 

If $L=1$, the first coefficient $b_1=(-1)^{n+1}2(\frac{1}{\sin\overline\psi_0}+...+\frac{1}{\sin\overline\psi_{m_0}})\neq 0$ and, again, the special normal perturbation solves the problem.

\subsection{Stability of periodic orbits}

Let  ${\cal O}( s_0,\psi_0)=\{( s_0,\psi_0), ( s_1,\psi_1),..., (s_{n-1},\psi_{n-1})\}$ be an $n$-periodic orbit for $T_\Gamma$.

As det($DT_\Gamma^n|_{( s_0,\psi_0)}) =1$,  ${\cal O}( s_0,\psi_0)$ is hyperbolic if $|\mbox{tr}(DT_\gamma^n|_{(s_0,\psi_0)} )|> 2$, elliptic if $|\mbox{tr}(DT_\gamma^n|_{( s_0,\psi_0)} )|<2$ and parabolic if $|\mbox{tr}(DT_\gamma^n|_{( s_0,\psi_0)} )|=2$.

Let $W(s_0,s_1,...,s_{n-1})=\sum_{i=0}^{n-1} g(s_i,s_{i+1}),$ where $g(s_0,s_1)=-d_s(\Gamma(s_0),\Gamma(s_1))$ is the generating function and $s_n=s_0$, be the action defined on the $n$-torus $\mathbb R/l\mathbb Z\times...\times \mathbb R/l\mathbb Z$ minus the set $\{(s_0,s_1,...,s_{n-1}) \, \mbox{s.t}\, \exists i\neq j \,\mbox{with}\,  s_i=s_j\}$.

The critical points of $W$ are the coordinates of the vertices on $\Gamma$ of the $n$-periodic trajectories, since $\frac{\partial g}{\partial s_i}(s_{i-1},s_i)=-\frac{\partial g}{\partial s_i}(s_i,s_{i+1})$. Remark that $W<0$ and it will always have a global minimum (perhaps degenerate), but not a global maximum.

The MacKay-Meiss formula \cite{mac} relates the derivative of $T_\Gamma^n$ with the Hessian matrix $H$ of $W$ by
$$2-\mbox{tr}DT^n_\Gamma|_{(s_0,\psi_0)}=(-1)^{n+1}\frac{\det H}{b_0b_1...b_{n-1}}$$
where $b_i=\frac{\partial^2 g}{\partial s_i\partial s_{i+1}}(s_i,s_{i+1})$.

It implies that the nondegenerate critical points of the action $W$ are the nondegenerate $n$-periodic orbits of $T_\Gamma$.

So, if $T_\Gamma$ has only nondegenerate $n$-periodic orbits, the nondegenerate minima of $W$ will always be hyperbolic. The other $n$-periodic orbits can be either hyperbolic or elliptic.

\section*{Acknowledgements}
 We thank the Brazilian agencies FAPEMIG, CAPES and CNPq for financial support.

\end{document}